\documentclass[10pt]{article}
\title{On Debreu-Koopmans Theorem for Additively Decomposed Quasiconvex Functions with Applications}
\author{Felipe Lara\thanks{Instituto de Alta investigaci\'on (IAI), Universidad de Tarapac\'a, Arica, Chile. E-mail: felipelaraobreque@gmail.com;
flarao@academicos.uta.cl. Web: felipelara.cl, ORCID-ID: 0000-0002-9965-0921}}

\usepackage{amsmath}
\usepackage{amsthm}
\usepackage{amssymb}
\usepackage{multicol}
\usepackage[active]{srcltx}
\usepackage{algorithm}
\usepackage{array}
\usepackage[dvipsnames,svgnames]{xcolor}
\usepackage{url}
\usepackage{tikz}
\usepackage{subfig}
\usepackage{graphicx}
\usepackage{pgfplots}
\usepackage{xcolor}
\usepackage{ulem}  
\usepackage{comment}
\providecommand{\U}[1]{\protect \rule{.1in}{.1in}}
\newtheorem{theorem}{Theorem}

\newtheorem{corollary}[theorem]{Corollary}

\newtheorem{definition}[theorem]{Definition}
\newtheorem{example}[theorem]{Example}

\newtheorem{proposition}[theorem]{Proposition}
\newtheorem{remark}[theorem]{Remark}

\begin{document}


\maketitle


\begin{abstract}
\noindent 
The Debreu–Koopmans theorem \cite{DK} restricts separable aggregation to at most one nonconvex component. We solve this by proving that a separable (additive or multiplicative) function is star quasiconvex (those with star-shaped sublevel sets about minimizers) if and only if each component is star quasiconvex. This immediately yields star quasiconvexity of separable sums of quasiconvex functions, formally bridging diversification theory with the $S$-shaped value functions of Prospect Theory. Furthermore, we develop a complete calculus (monotonic composition, pointwise minima, quasi-arithmetic means) and we apply it to Cobb-Douglas functions, multi-factor risk models, and constant function market makers in decentralized finance. Star quasiconvexity thus provides a unified framework for applications in optimization and economic modeling beyond the classical Debreu–Koopmans constraint. The introduction discuss economic motivations.
{\small}

\medskip

\noindent{\small \emph{Keywords}: Nonconvex optimization; Quasiconvexity; Separable Functions; Economic modeling; Debreu–Koopmans theorem; Qua\-si‑arith\-metic means}

\medskip

\noindent \textbf{Mathematics Subject Classification:} 90C26; 91B16; 91B24; 90C32
\end{abstract}

\section{Introduction}

A fundamental challenge in economic modeling is reconciling realistic behavioral assumptions with mathematical tractability. Among the most basic normative axioms is the {\it tendency to diversification} (or convex preferences): economic agents, whether consumers or investors, generally prefer balanced combinations of goods or assets to extreme bundles. Mathematically, this principle is formalized through {\it quasiconcavity} of utility, production, or social welfare functions (see, for instance, \cite{D-1959}).

To manage analytical complexity in multi-dimensional settings, economists frequently employ {\it separable} (additively or multiplicatively decomposed) functional forms, where an aggregate outcome is expressed as a sum of independent components. This structure is pervasive in applications ranging from consumer theory and production economics to macroeconomic growth and modeling \cite{AE,Green,Slutsky,Yaa}, and it facilitates computation through efficient methods like block-coordinate optimization \cite{BT,N-2012,W-2015}, widely used in large-scale optimization and machine learning among others.

Consider a convex set $X \subseteq \mathbb{R}^{n}$ such that $X = \prod^{m}_{i=1} X_{i}$, where each $X_{i} \subseteq \mathbb{R}^{n_{i}}$ is convex and $\sum^{m}_{i=1} n_{i} = n$. Let $h_{i}: X_{i} \rightarrow \mathbb{R}$ be functions and $h: X \rightarrow \mathbb{R}$ be the function defined by
\begin{equation}\label{separable}
 h(x_{1}, \ldots, x_{m}) := \sum \limits_{i=1}^{m} h_{i}(x_{i}). 
\end{equation}
For (strongly) convex functions, a complete characterization is known:
\begin{equation}\label{char:convex}
 h \text{ is (strongly) convex} ~ \Longleftrightarrow ~ \text{every } h_{i} \text{ is (strongly) convex}.
\end{equation}
However, for the economically crucial class of quasiconvex functions (a function $h$ is quasiconvex if and only if $-h$ is quasiconcave), which is related to diversification, no analogous equivalence holds. Instead, the Debreu-Koopmans theorem \cite[Theorems 2 and 10]{DK} (see also \cite{CL}) imposes a severe restriction: if a separable function \(h\) is quasiconvex, then {\it at most one} component \(h_i\) may be nonconvex.

This result creates a profound methodological dilemma: to maintain diversification (quasiconvexity) within a separable model, one must impose convexity on nearly all components. In the Bergson-Samuelson social welfare function \cite{Berg,Samuel}, for instance, this forces almost all individuals to have convex utility functions, severely restricting the range of admissible preference patterns. Similarly, in ratio models with multiple risk factors modeled via fractional programming, the Debreu-Koopmans theorem permits at most one risk variable to be active (see, for instance, \cite[Chapter 5]{ADSZ}), an unrealistic limitation in virtually any applied economic setting. 

This leaves open a fundamental, and inverse, question: If we start from the economically natural assumption that each component is quasiconvex (representing local diversification), what can be said about the global property of their sum?. We formulate this as the {\it Debreu-Koopmans problem}:
\begin{equation}\label{DK}
\text{If every } h_{i} \text{ is quasiconvex} ~ \Longrightarrow ~ h = \sum^{m}_{i=1} h_i \in ~ ?. \tag{DK}
\end{equation}
This paper solves the Debreu-Koopmans problem by showing that the separable aggregation of quasiconvex components yields a {\it star quasiconvex} function, one whose sublevel sets are star-shaped with respect to its minimizers \cite{KL,NTV}. This class is strictly broader than quasiconvexity (every quasiconvex function with a minimizer is star quasiconvex, but the converse  fails), yet retains an economically meaningful structure that aligns with  {\it star-shaped preferences} centered at a minimum. Beyond additive aggregation, we extend this characterization to multiplicative separability and develop a comprehensive calculus encompassing monotonic composition, pointwise minima, and {\it weighted quasi-arithmetic means}, providing a versatile toolkit for economic and optimization modeling.

\medskip

As direct consequences of these results:
\begin{itemize}
 \item[$(i)$] We prove that separable $S$-shaped value functions (concave for gains and convex for losses), as they appear in Kahneman and Tversky's Prospect Theory \cite{KT,TK}, are star quasiconvex, formally bridging normative diversification with behavioral choice.
    
 \item[$(ii)$] We show that workhorse production and utility functions, including CES (Constant Elasticity of Substitution), Cobb-Douglas and Leontief, are star quasiconvex on compact domains. 
 
 \item[$(iii)$] We demonstrate that ratio models with multiple risk factors (where benefits and risks are modeled separately as $f(x)/g(y)$) become tractable under star quasiconvexity. While the Debreu-Koopmans theorem restricts such models to at most one risk variable, star quasiconvexity imposes no such limitation, enabling realistic specifications with multiple sources of uncertainty.
\end{itemize}


The structure of the paper is as follows. Section \ref{sec:02} recalls the necessary preliminaries on star quasiconvexity and its geometric interpretation. Section \ref{sec:03} contains our main results: we characterize separable star quasiconvex functions, resolving the Debreu-Koopmans problem, and show that separable $S$-shaped value functions from Prospect Theory are star quasiconvex, formally bridging neoclassical and behavioral economics. We also develop a calculus rule for star quasiconvexity with applications to production and utility functions (Leontief, Cobb-Douglas). Section \ref{sec:04} demonstrates the optimization/economic relevance of our framework through weighted quasi-arithmetic means and ratio models with multiple risk factors. Section \ref{sec:05} concludes with a discussion of the implications for future research in both economic theory and mathematical optimization.

\section{Preliminaries and Basic Definitions} \label{sec:02}

The inner product in $\mathbb{R}^{n}$ and the Euclidean norm are denoted by $\langle \cdot,\cdot \rangle$ and $\lVert \cdot \rVert$, respectively. We denote  $\mathbb{R}_{+} := [0, + \infty[$ and $ \mathbb{R}_{++} := \, ]0, + \infty[$, thus $\mathbb{R}^{n}_{+} = [0, + \infty[ \, \times \ldots \times [0, + \infty[$ and $\mathbb{R}^{n}_{++} = \, ]0, + \infty[ \, \times \ldots \times \, ]0, + \infty[$ ($n$ times). 

Let $K \subseteq \mathbb{R}^{n}$ be nonempty. Then given $x_{0} \in K$, the set $K$ is said to be {\it star-shaped} at the point $x_{0}$ if for every $x \in K$, we have $[\overline{x}, x] \subseteq K$. 

Given a real-valued function $h: \mathbb{R}^{n} \rightarrow \mathbb{R}$, it is indicated by ${\rm epi}\,h := \{(x,t) \in \mathbb{R}^{n} \times \mathbb{R}: h(x) \leq t\}$ the epigraph of $h$, by $S_{\delta} (h) := \{x \in \mathbb{R}^{n}: h(x) \leq \delta\}$ the sublevel set of $h$ at the height $\delta \in \mathbb{R}$ and by ${\rm argmin}_{ \mathbb{R}^{n}} h$ (resp. ${\rm argmax}_{ \mathbb{R}^{n}} h$) the set of all minimal (resp. maximal) points of $h$ on $\mathbb{R}^{n}$. 

Throughout this paper, given a function $g: \mathbb{R} \rightarrow \mathbb{R}$, by {\it increasing} (resp. {\it decreasing}) we mean $a < b$ implies $g(a) < g(b)$ (resp. $a < b$ implies $g(a) > g(b)$), and by {\it nondecreasing} (resp. {\it nonincreasing}) we mean $a < b$ implies $g(a) \leq g(b)$ (resp. $a < b$ implies $g(a) \geq g(b)$). Also, we say that $g$ is {\it strictly monotonic} if it is either increasing or decreasing.

A function $h: \mathbb{R}^{n} \rightarrow \mathbb{R}$ is said to be:
\begin{itemize}
 \item[$(a)$] (strongly) convex if there exists $\gamma \geq 0$ such that if for all $x, y \in \mathbb{R}^{n}$ and all $\lambda \in [0, 1]$, we have
 \begin{equation}\label{strong:convex}
  h(\lambda y + (1-\lambda)x) \leq \lambda h(y) + (1-\lambda) h(x) - \lambda (1 - \lambda) \frac{\gamma}{2} \lVert x - y \rVert^{2},
 \end{equation}

 \item[$(b)$] (strongly) quasiconvex if there exists $\gamma \geq 0$ such that if for all $x, y \in \mathbb{R}^{n}$ and all $\lambda \in [0, 1]$, we have
 \begin{equation}\label{strong:quasiconvex}
  h(\lambda y + (1-\lambda)x) \leq \max \{h(y), h(x)\} - \lambda(1 - \lambda) \frac{\gamma}{2} \lVert x - y \rVert^{2}.
 \end{equation}
\end{itemize} 
\noindent Every (strongly) convex function is (strongly) quasiconvex, while the reverse statement does not holds (see \cite{ADSZ,CM-Book,Lara-9}). Furthermore, when we say that a function is {\it strongly convex (resp. quasiconvex) with modulus $\gamma \geq 0$} we refer to both strongly convex (resp. quasiconvex) when $\gamma > 0$ and convex (resp. qua\-siconvex) when $\gamma = 0$.

As we explained in the introduction, {\it quasiconvexity} is related to the mathematical formulation of the assumption {\it tendency to diversification} on the consumers in consumer's preference theory (see Debreu \cite{D-1959}). Furthermore, the following geometric characterizations are well-known:
\begin{align*}
  h ~ \mathrm{is ~ convex} & \Longleftrightarrow \, \mathrm{epi}\,h ~ \mathrm{is ~ a ~ convex ~ set,} \\ 
  h ~ \mathrm{is ~ quasiconvex} & \Longleftrightarrow \, S_{\delta} (h) ~
 \mathrm{is ~ a ~ convex ~ set ~ for ~ all ~ } \delta \in \mathbb{R}.
\end{align*}



Now, we recall the following definition from  \cite{KL,NTV}, which has been developed in virtue of its interesting properties for ensuring linear convergence of first-order type methods in optimization.

\begin{definition}\label{ss:quasiconvex}
 Let $h: \mathbb{R}^{n} \rightarrow \mathbb{R}$ be  function and $\overline{x} \in {\rm argmin}_{\mathbb{R}^{n}}\,h$. Then $h$
 is (strongly) star quasiconvex with modulus $\gamma \geq 0$ with respect to $\overline{x}$ if 
 \begin{align*}\label{ss:qcx}
  h(\lambda \overline{x} + (1-\lambda)y) \leq h(y) - \lambda (1 - \lambda)
  \frac{\gamma}{2} \lVert y - \overline{x} \rVert^{2}, \, \forall \, \lambda \in 
  [0, 1], \, \forall \, y \in \mathbb{R}^{n}.
 \end{align*}
 A function $h$ is strongly star quasiconcave if $-h$ is strongly star quasiconvex. 
\end{definition}
We note immediately that when $\gamma > 0$, strongly star quasiconvex functions has an unique minimizer, i.e., in this case, we simple say that $h$ is strongly star quasiconvex with modulus $\gamma > 0$ (without refering to a specific point $\overline{x}$). 

Three important geometric properties are described below.  Let $h: \mathbb{R}^{n} \rightarrow \mathbb{R}$ be a function. Then the following assertions hold:
\begin{itemize} 
 \item[$(i)$] (see \cite[Theorem 9]{KL}) $h$ is star quasiconvex at $\overline{x} \in {\rm argmin}_{\mathbb{R}^{n}}\,h$ if and only if $S_{\delta} (h)$ is star-shaped  at $\overline{x}$ for every $\delta \in \mathbb{R}$.

 \item[$(ii)$] (see \cite[Theorem 12]{KL}) Let $\overline{x} \in {\rm argmin}_{\mathbb{R}^{n}}\,h$. Then, $h$ is (strongly) star quasiconvex at $\overline{x}$ with modulus $\gamma \geq 0$ if and only if its restriction on any half line through $\overline{x}$ is (strongly) qua\-siconvex with the same modulus $\gamma \geq 0$.

 \item[$(iii)$] {\rm see (\cite[Theorem 3.3]{NTV})}:
 Let $h: \mathbb{R}^{n} \rightarrow \mathbb{R}$ be a di\-ffe\-ren\-tia\-ble function and $\overline{x} \in {\rm argmin}_{\mathbb{R}^{n}}\,h$. Then, $h$ is (strongly) star qua\-si\-con\-vex with modulus $\gamma \geq 0$ with respect to $\overline{x}$ if and only if for every $y \in \mathbb{R}^{n}$, we have
 \begin{equation}\label{char:grad}
  \langle \nabla h(y), y - \overline{x} \rangle \geq \frac{\gamma}{2} \lVert y - \overline{x} \rVert^{2}.
 \end{equation}
\end{itemize} 

\begin{remark}\label{rem:relat}
 \begin{itemize}
  \item[$(i)$] The following re\-la\-tion\-ship between convex, quasiconvex and star quasiconvex holds (see \cite[Proposition 5]{KL}). In the next scheme, we denote "qcx = quasiconvex":
 \begin{align}
  \begin{array}{ccccccc}
   {\rm strongly ~ convex} & \Rightarrow & {\rm strongly ~ qcx} & \overset{*}{\Rightarrow} & 
   {\rm strongly ~ star ~ qcx} \notag \\
   \Downarrow & \, & \Downarrow & \, & \Downarrow  \notag \\
   {\rm convex} & \Rightarrow & {\rm qcx} & \overset{*}{\Rightarrow} & {\rm star ~ qcx}
   \end{array}
  \end{align}
  where ``$*$" denotes that ${\rm argmin}_{ \mathbb{R}^{n}\,}h \neq \emptyset$ is required. 

  \item[$(ii)$] Interesting examples of star quasiconvex functions maybe be found in \cite{BLL,KL,NTV} and references therein, including the CES production/utility function (see \cite[Example 9]{BLL}).
  
  \item[$(iii)$] Observe that when $\gamma > 0$, relation \eqref{char:grad} shows that differentiable strongly star quasiconvex functions coincides with functions satisfying the {\it Restricted Secant Inequality} (RSI) property (see \cite{Yi,ZY} among others). 
 
  When $\gamma = 0$ in relation \eqref{char:grad}, differentiable star quasiconvex functions includes {\it Variationally Coherent} (VC) functions, which were developed in \cite{ZBoyd} (or differentiable functions which satisfies \cite[Assumption $(1.2)$]{SS}). This inclusion may be strict as \cite[Remark 28$(ii)$]{KL} shows.

  Both classes (RSI) and (VC) have been studied deeply in continuous optimization and machine learning theory in virtue of its properties for gradient-type methods.
  
  \item[$(iv)$] For further properties on (strongly) star quasiconvex functions as well as sufficient conditions for linear convergence of first-order type algorithms in optimization, we refer to \cite{KL,NTV}.
 \end{itemize}
\end{remark}

From the economical point of view, a star quasiconcave function $h$ (i.e., when $-h$  is star quasiconvex) is related to {\it star-shaped preferences}, which appears naturally in economic theory in several works (see, for instance, \cite{BJ,FOS,Mas}). 

\medskip

For a further study on generalized convexity we refer to \cite{ADSZ,CM-Book,KL,Lara-9,NTV} and refe\-ren\-ces therein.

\section{Main Results}\label{sec:03}

\subsection{Additive Separable Star Quasiconvexity}

In the following result, we characterize separable (strongly) star quasiconvex functions.

\begin{theorem}{\bf (Additive Separable Star Quasiconvexity)} \label{th:sepa}
 Let $X = \prod^{m}_{i=1} X_{i}$, $h: X \rightarrow \mathbb{R}$ be defined as in \eqref{separable}, and $\overline{x} := (\overline{x}_{1}, \ldots, \overline{x}_{m}) \in {\rm argmin}_{X}\,h$. Then, $h$ is (strongly) star quasiconvex with modulus $\gamma \geq 0$ with respect to $\overline{x}$ if and only if every $h_{i}$ is (strongly) star quasiconvex with the same modulus $\gamma \geq 0$ with respect to $\overline{x}_{i} \in {\rm argmin}_{X_{i}}\,h_{i}$.
\end{theorem}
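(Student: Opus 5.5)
The plan is to work directly from Definition \ref{ss:quasiconvex}, applied on the product set $X$ and on each factor $X_i$, and to use two facts. First, the squared Euclidean norm splits across blocks: $\lVert y-\overline{x}\rVert^{2}=\sum_{i=1}^{m}\lVert y_i-\overline{x}_i\rVert^{2}$. Second, convex combinations are taken coordinatewise: $\lambda\overline{x}+(1-\lambda)y=(\lambda\overline{x}_i+(1-\lambda)y_i)_{i=1}^{m}$, and each block stays in $X_i$ by convexity.

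A preliminary step checks that the minimizer is compatible with the product structure. If $\overline{x}\in{\rm argmin}_X h$, then $\overline{x}_i\in{\rm argmin}_{X_i}h_i$ for each $i$. To see this, suppose some $y_i\in X_i$ had $h_i(y_i)<h_i(\overline{x}_i)$. Replacing the $i$-th block of $\overline{x}$ by $y_i$ gives a point of $X$ with strictly smaller value of $h$, a contradiction. Conversely, if every $\overline{x}_i$ minimizes $h_i$, then $\overline{x}$ minimizes $h$. So the hypotheses on minimizers on the two sides of the equivalence match.

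\emph{Sufficiency.} Assume each $h_i$ is star quasiconvex with modulus $\gamma$ with respect to $\overline{x}_i$. Fix $y\in X$ and $\lambda\in[0,1]$, write the inequality of Definition \ref{ss:quasiconvex} for each block, and sum over $i$. The left-hand sides add up to $h(\lambda\overline{x}+(1-\lambda)y)$. The right-hand sides add up to $h(y)-\lambda(1-\lambda)\frac{\gamma}{2}\sum_i\lVert y_i-\overline{x}_i\rVert^{2}$, which equals $h(y)-\lambda(1-\lambda)\frac{\gamma}{2}\lVert y-\overline{x}\rVert^{2}$ by the norm splitting. This is exactly the required inequality for $h$.

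\emph{Necessity.} This is the step needing care, because from the single inequality for $h$ we cannot directly separate the blocks. The trick is to test the inequality for $h$ only on points that move one block. Fix $i$, a point $y_i\in X_i$ and $\lambda\in[0,1]$, and let $y$ be $\overline{x}$ with its $i$-th block replaced by $y_i$. Then $\lambda\overline{x}+(1-\lambda)y$ agrees with $\overline{x}$ in every block $j\neq i$, and $\lVert y-\overline{x}\rVert=\lVert y_i-\overline{x}_i\rVert$. Applying the star quasiconvexity of $h$ at this $y$, the terms $h_j(\overline{x}_j)$ with $j\neq i$ appear on both sides and cancel. What remains is $h_i(\lambda\overline{x}_i+(1-\lambda)y_i)\le h_i(y_i)-\lambda(1-\lambda)\frac{\gamma}{2}\lVert y_i-\overline{x}_i\rVert^{2}$, which is the claim for $h_i$, with $\overline{x}_i$ a minimizer by the preliminary step.

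The modulus carries over unchanged in both directions, since the norm splitting is an exact identity rather than an inequality. So no constants are lost, and the argument covers $\gamma=0$ and $\gamma>0$ simultaneously.
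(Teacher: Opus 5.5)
Your proposal is correct and follows essentially the same route as the paper: you characterize the minimizers componentwise, you prove sufficiency by summing the blockwise inequalities and using $\lVert y-\overline{x}\rVert^{2}=\sum_i\lVert y_i-\overline{x}_i\rVert^{2}$, and you prove necessity by testing the inequality for $h$ on points that differ from $\overline{x}$ in a single block. Your explicit argument that each $\overline{x}_i$ minimizes $h_i$, which the paper simply asserts as clear, is a welcome addition.
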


\begin{proof}
 Clearly, $\overline{x} := (\overline{x}_{1}, \ldots, \overline{x}_{m}) \in {\rm
 argmin}_{X}\,h$ if and only if $\overline{x}_{i} \in {\rm argmin}_{X_{i}}\,h_{i}$ 
 for every $i \in \{1, \ldots, m\}$ (which holds for every separable function). 

 $(\Rightarrow)$: Assume that $h$ is (strongly) star quasiconvex with modulus $\gamma \geq 0$ with respect to $\overline{x} := (\overline{x}_{1}, \ldots,  \overline{x}_{m}) \in {\rm argmin}_{X}\,h$. Fix $i_{0} \in \{1,\ldots, m\}$. Then for every $y \in X_{i_{0}}$, consider the vector $z := (z_{1}, \ldots, z_{m})$ such that $z_{j} = \overline{x}_{j}$ for all $j \neq i_{0}$ and $z_{i_{0}} = y$. Then, for every $\lambda \in [0, 1]$, we have
 \begin{equation}\label{h}
  h (\lambda \overline{x} + (1-\lambda) z) \leq h(z) - \lambda (1-\lambda) \frac{\gamma}{2} \lVert z - \overline{x} \rVert^{2}. 
 \end{equation}

 Since $h_{j} (\lambda \overline{x} + (1-\lambda) z) = h_{j}(\overline{x}_{j})$ for every $j \neq i_{0}$, and $h (z) = \sum_{j \neq i_{0}} h_{j} (\overline{x}_{j}) + h_{i_{0}} (y)$,  re\-la\-tion \eqref{h} gives
\begin{align*}
 h_{i_{0}} (\lambda \overline{x}_{i_{0}} + (1-\lambda) y) & \leq h_{i_{0}} (y) - \lambda (1-\lambda) \frac{\gamma}{2} \lVert y - \overline{x}_{i_{0}} \rVert^{2}, 
\end{align*}
 thus $h_{i_{0}}$ is (strongly) star quasiconvex with mudulus $\gamma \geq 0$ with respect to $\overline{x}_{i_{0}} \in {\rm argmin}_{X_{i_{0}}}\,h_{i_{0}}$. Since $i_{0}$ was ar\-bi\-tra\-ry, every $h_{i}$ is (strongly) star quasiconvex with mudulus $\gamma \geq 0$ with respect to $\overline{x}_{i}$. 

 $(\Leftarrow)$: Let $\widehat{y} := (y_{1}, \ldots, y_{m}) \in X$. Since every $h_{i}$ is (strongly) star quasiconvex with modulus $\gamma \geq 0$  with respect to $\overline{x}_{i}$, we have
 \begin{align*}
  h_{i} (\lambda \overline{x}_{i} + (1-\lambda)y_{i}) \leq h_{i} (y_{i}) - \lambda
  (1-\lambda) \frac{\gamma}{2} \lVert y_{i} - \overline{x}_{i} \rVert^{2}, ~ 
  \forall ~ i \in \{1, \ldots, m\}.
 \end{align*}
 Summing up from $i=1$ to $i=m$, and since $h \sum_{i=1}^{m} h_{i}$, we obtain
 \begin{align*}
  h(\lambda \overline{x} + (1-\lambda) \widehat{y}) & \leq \sum^{m}_{i=1} h_{i} (y_{i}) - \lambda (1-\lambda) \frac{\gamma}{2}
 \sum^{m}_{i=1} \lVert y_{i} - \overline{x}_{i} \rVert^{2} \\
 & = h(\widehat{y}) - \lambda (1-\lambda) \frac{\gamma}{2} \lVert \widehat{y} 
  - \overline{x} \rVert^{2},
 \end{align*}
 i.e., $h$ is (strongly) star quasiconvex with modulus $\gamma \geq 0$ with respect to $\overline{x}$. 
\end{proof}

As a consequence, we solves problem \eqref{DK} when ${\rm argmin}_{X}\,h \neq \emptyset$.

\begin{corollary}\label{DK:problem} {\bf (Solving Debreu-Koopmans Problem)}
 Let $X = \prod^{m}_{i=1} X_{i}$, $h: X \rightarrow \mathbb{R}$ be defined as in \eqref{separable}, and $\overline{x} = (\overline{x}_{1}, \ldots, \overline{x}_{m}) \in {\rm argmin}_{X}\,h$. If every $h_{i}$ is (strongly) quasiconvex with modulus $\gamma \geq 0$, then $h$ is (strongly) star quasiconvex with the same modulus $\gamma \geq 0$ with respect to $\overline{x}$.
\end{corollary}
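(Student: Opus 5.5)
The corollary follows by combining Theorem~\ref{th:sepa} with the implication ``(strongly) quasiconvex $\Rightarrow$ (strongly) star quasiconvex'' from Remark~\ref{rem:relat}$(i)$. That implication requires a minimizer, so the main work is to supply one for each component.

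The plan has three steps.

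\textbf{Step 1: minimizers of the components.} Because $h$ is separable and $\overline{x} \in {\rm argmin}_{X}\,h$, we get $\overline{x}_{i} \in {\rm argmin}_{X_{i}}\,h_{i}$ for every $i$, as observed at the start of the proof of Theorem~\ref{th:sepa}. Concretely, fix $i$ and replace only the $i$-th block of $\overline{x}$ by an arbitrary $y_{i} \in X_{i}$, keeping the other blocks equal to $\overline{x}_{j}$. Comparing $h$ at this point with $h(\overline{x})$ gives $h_{i}(\overline{x}_{i}) \leq h_{i}(y_{i})$. This step is the only place where the existence of ${\rm argmin}_{X}\,h$ is used, and it is the crucial one: without it, the starred implication in Remark~\ref{rem:relat}$(i)$ is unavailable.

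\textbf{Step 2: each $h_{i}$ is (strongly) star quasiconvex.} Fix $i$, and take $y_{i} \in X_{i}$ and $\lambda \in [0,1]$. Apply the (strong) quasiconvexity inequality \eqref{strong:quasiconvex} of $h_{i}$ to the pair $\overline{x}_{i}, y_{i}$:
\[
h_{i}(\lambda \overline{x}_{i} + (1-\lambda) y_{i}) \leq \max\{h_{i}(\overline{x}_{i}), h_{i}(y_{i})\} - \lambda(1-\lambda)\frac{\gamma}{2}\lVert y_{i} - \overline{x}_{i}\rVert^{2}.
\]
By Step 1, the maximum equals $h_{i}(y_{i})$. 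The resulting inequality is exactly Definition~\ref{ss:quasiconvex} for $h_{i}$, with respect to $\overline{x}_{i}$ and with the same modulus $\gamma$. Convexity of $X_{i}$ guarantees that the segment $[\overline{x}_{i}, y_{i}]$ lies in the domain.

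\textbf{Step 3: conclude with the theorem.} Invoke the ``$\Leftarrow$'' direction of Theorem~\ref{th:sepa}. It yields that $h$ is (strongly) star quasiconvex with modulus $\gamma$ with respect to $\overline{x}$.

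No real obstacle is expected beyond the bookkeeping on the domain. Definition~\ref{ss:quasiconvex} is stated on $\mathbb{R}^{n}$, while here the functions live on $X$ and the $X_{i}$. I would note that it is read with $\mathbb{R}^{n}$ replaced by $X$ and $X_{i}$, as in Theorem~\ref{th:sepa}, and that this is legitimate because these sets are convex.
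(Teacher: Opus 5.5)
Your proposal is correct and follows the paper's own proof. The paper also notes that $\overline{x}_i \in {\rm argmin}_{X_i}\,h_i$, cites Remark~\ref{rem:relat}$(i)$ to get that each $h_i$ is (strongly) star quasiconvex with modulus $\gamma$, and concludes via Theorem~\ref{th:sepa}; your Step 2 just writes out the starred implication of that remark explicitly.
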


\begin{proof}
 Since every $h_{i}$ is (strongly) quasiconvex with modulus $\gamma \geq 0$ and $\overline{x}_{i} \in {\rm argmin}_{X_{i}}\,h_{i}$ for every $i \in \{1, \ldots, m\}$, $h_{i}$ is (strongly) star quasiconvex with mo\-du\-lus $\gamma \geq 0$ with respect to $\overline{x}_{i}$ for every $i \in \{1, \ldots, m\}$ by Remark \ref{rem:relat}$(i)$, and the result follows by Theorem \ref{th:sepa}.
\end{proof}

In the following example, we apply the previous result to classical economic theory in social welfare (see \cite{Berg,Samuel}).

\begin{example}\label{ex:berg-samuel}
{\bf (Bergson-Samuelson Social Welfare)}
Let $x_i \in X_i$ be the consumption vectors for each individual $i = 1, \ldots, m$, where $X_i \subseteq \mathbb{R}^{n_{i}}$ is a convex set and  $\sum^{m}_{i=1} n_{i} = n$. Let $u_i: X_i \to \mathbb{R}$ be the utility function of each consumer, which represents their individual preferences. Then the Bergson-Samuelson Social Welfare Function (SWF) is defined (in its additively decomposable form) as $W: X \to \mathbb{R}$ given by
\begin{equation}\label{SWF}
 W(x_1, \ldots, x_m) = \sum_{i=1}^{m} u_i(x_i).
\end{equation} 
   
The Debreu-Koopmans theorem implies that for a separable welfare function $W$ to be quasiconvex, at least $m-1$ of the individual utility functions must be convex. This imposes severe restrictions on admissible preference patterns.

On the other hand, if each function $u_i$ is quasiconvex, then $W$ is star quasiconvex by Corollary \ref{DK:problem}, provided that each $u_i$ has a minimizer on $X_i$. 

Since quasiconvex functions are related to convex preferences, while star quasiconvex functions are related to star-shaped preferences, we conclude that {\rm the aggregation of individual convex preferences yields a star-shaped social pre\-fe\-ren\-ce relation.}
\end{example}

Another consequence of Corollary \ref{DK:problem} is that separable value functions in Prospect Theory (see \cite{KT,TK,Wakker}) are (strongly) star quasiconvex when the sets $X_{i} \subseteq \mathbb{R}$. Indeed, as noted in the seminal work of Kahneman and Tversky \cite[page 279]{KT}, the value functions in Prospect Theory are $S$-shaped (more generally, a sigmoid-type function): concave for gains and convex for losses, reflecting risk aversion for gains and risk seeking for losses. In one dimension, such $S$-shaped functions are nondecreasing, and every nondecreasing function is quasiconvex (see \cite[Theorem 2.5.1]{CM-Book}), thus each component of a separable $S$-shaped value function is quasiconvex. Consequently, by applying Corollary \ref{DK:problem}, the aggregate separable $S$-shaped function is star quasiconvex, provided it attains a minimizer.

We have proved the following result:

\begin{corollary}\label{PTVF:separable} {\bf (Star Quasiconvexity of Separable Prospect Theory Va\-lue/Utility Functions)}
 Let $X_{i} \subseteq \mathbb{R}$ be convex sets, $h_{i}: X_{i} \rightarrow \mathbb{R}$ be functions, $\overline{x}_{i} \in {\rm argmin}_{X_{i}}\,h_{i}$, and $h: X \rightarrow \mathbb{R}$ be such that relation \eqref{separable} holds. If every $h_{i}$ has a {\it S-shape} form (that is, concave for gains and convex for losses), then the aggregate function $h$ is star quasiconvex on $X$ with respect to $\overline{x} = (\overline{x}_{1}, \ldots, \overline{x}_{n}) \in {\rm argmin}_{X}\,h$.
\end{corollary}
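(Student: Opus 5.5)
The plan is to reduce the statement to Corollary \ref{DK:problem} with modulus $\gamma = 0$. That corollary needs two things: every component $h_{i}$ must be quasiconvex on $X_{i}$, and each $\overline{x}_{i}$ must be a minimizer of $h_{i}$. The second is an explicit hypothesis, and the product minimizer $\overline{x}$ is then automatically in ${\rm argmin}_{X}\,h$ by the separability remark at the start of the proof of Theorem \ref{th:sepa}. So the only real content is the one-dimensional claim: an $S$-shaped $h_{i}: X_{i} \to \mathbb{R}$ on an interval $X_{i} \subseteq \mathbb{R}$ is quasiconvex.

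For that claim I will first make the $S$-shape hypothesis precise, as the paper implicitly does. There is a reference point $r_{i}$ (usually $0$) such that $h_{i}$ is nondecreasing on $X_{i}$, convex on $X_{i} \cap\, ]-\infty, r_{i}]$ and concave on $X_{i} \cap [r_{i}, +\infty[$. Monotonicity is the property that actually matters. Once $h_{i}$ is nondecreasing, every sublevel set $S_{\delta}(h_{i}) = \{t \in X_{i} : h_{i}(t) \leq \delta\}$ is an initial segment of the interval $X_{i}$. Indeed, if $t$ lies in it and $s \leq t$ with $s \in X_{i}$, then $h_{i}(s) \leq h_{i}(t) \leq \delta$. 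An initial segment of an interval is an interval, hence convex, so $h_{i}$ is quasiconvex by the sublevel-set characterization (this is also \cite[Theorem 2.5.1]{CM-Book}). Equivalently, for $s < t$ and $\lambda \in [0,1]$, the point $\lambda s + (1-\lambda)t$ lies below $t$, so its value is at most $h_{i}(t) \leq \max\{h_{i}(s), h_{i}(t)\}$.

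With quasiconvexity of each $h_{i}$ established, Corollary \ref{DK:problem} (or Remark \ref{rem:relat}$(i)$ followed by Theorem \ref{th:sepa}) gives that $h$ is star quasiconvex with respect to $\overline{x}$. A side remark on the structure: for a nondecreasing $h_{i}$, a minimizer exists exactly when $X_{i}$ contains its left endpoint, or when $h_{i}$ is constant on an initial piece of $X_{i}$. This explains why the existence of $\overline{x}_{i}$ must be assumed. I would also correct the evident typo $\overline{x}_{n}$ to $\overline{x}_{m}$; here $m = n$ because every $n_{i} = 1$.

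The main obstacle is interpretive rather than technical. The statement says only ``concave for gains and convex for losses'', and that alone does \emph{not} imply quasiconvexity. For example, a function that is convex and decreasing on the losses and then concave and decreasing afterwards can fail to have convex sublevel sets unless monotonicity is built in. So the first thing I will do is state explicitly that, following the paper's discussion preceding the corollary, value functions in Prospect Theory are taken to be nondecreasing. With that assumption fixed, the argument above is complete and the convex/concave shape plays no role.
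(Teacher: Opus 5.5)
Your proposal is correct and is essentially the paper's own argument: the paper likewise treats $S$-shaped value functions as nondecreasing, obtains quasiconvexity of each $h_{i}$ from \cite[Theorem 2.5.1]{CM-Book}, and concludes via Corollary \ref{DK:problem}, so your explicit monotonicity assumption simply makes precise what the paper uses implicitly. One small correction to your side remark: your example as stated is not a counterexample, because a function that is decreasing on both pieces, each containing $r_{i}$ as in your formalization, is decreasing on all of $X_{i}$ and hence quasiconvex; a genuine counterexample needs either a change in the direction of monotonicity (e.g.\ convex and increasing on losses, then concave and first rising, then falling on gains) or, if the split at the reference point is half-open, an upward jump there.
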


As an example, let us consider the following  piecewise power value/utility function with probability weighting (see, for instance, \cite{Wak-1}). 
\begin{example} {\bf (Prospect Theory Value/Utility Functions with Probability Weighting)} \label{ex:PT}
 Let $X_i = [a_{i}, b_{i}] \subseteq \mathbb{R}$ be compact convex sets for every $i \in \{1, \ldots, n\}$, with $a_{i} < b_{i}$ for every $i$. Furthermore, every prospect has:
\begin{itemize}
    \item[$(i)$] Outcome $x_i \in X_i$.
    \item[$(ii)$] Associated probability $p_i \in \, ]0,1]$ with $\sum_{i=1}^{n} p_i = 1$.
    \item[$(iii)$] Decision weight $\pi_i = w(p_i)$ where $w:[0,1]\to[0,1]$ is a probability weighting function.
\end{itemize}
Define the value/utility function $u_i: X_i \to \mathbb{R}$ by 
\begin{equation}\label{W:example}
u_i(x_i) = 
\begin{cases}
 ~~ x_i^{\alpha_i} & {\rm for ~} x_i \geq 0 \quad {\rm (gains)}, \\[8pt]
-\lambda_i (-x_i)^{\beta_i} & {\rm for ~} x_i < 0 \quad {\rm (losses)},
\end{cases}
\end{equation}
where $\alpha_i, \beta_i \in \, ]0, 1[$ and $\lambda_i > 1$ for every $i \in \{1, \ldots, n\}$.

Then the Separable Prospect Theory valuation (see \cite{Wak-3}) for the bundle of prospects is 
\begin{equation}\label{pvalue:func}
 V(x_1, \ldots, x_n) = \sum_{i=1}^{n} \pi_i u_i(x_i) = \sum_{i=1}^{n} h_i(x_i), 
\end{equation}
where $h_i(x_i) = \pi_i  u_i(x_i)$.

Note that function $V$ is star quasiconvex (with modulus $\gamma = 0$) with respect to its minimizer $\overline{x} = (a_1, \ldots, a_n) \in X$. Indeed, every $u_{i}$ is nondecreasing on $X_{i}$, and since $p_{i} > 0$ and $\pi_{i} > 0$, $h_{i}$ is nondecreasing on $X_{i}$, too, thus quasiconvex on $X_{i}$ by \cite[Theorem 2.5.1]{CM-Book}. Therefore, by Corollary \ref{DK:problem}, the aggregate Prospect Theory valuation function $V$ inherits star quasiconvexity from its components, with minimizer $\overline{x} = (a_1,\ldots,a_n)$.

An illustration of function $V$ and its star-shaped sublevel sets are given in Figures \ref{fig:01} and \ref{fig:02}, respectively.

Moreover, if every $u_i$ is (strongly) star quasiconvex with mo\-du\-lus $\gamma_i \geq 0$ with respect to its minimizer $a_i \in X_{i}$, then every $h_i$ is (strongly) star quasiconvex with modulus $\pi_i\gamma_i \geq 0$ and, consequently, $V$ is (strongly) star quasiconvex with respect to $\overline{x} = (a_1, \ldots, a_{n})$ with modulus $\gamma = \min_{i} \{\pi_i \gamma_i\} \geq 0$.

\begin{figure}[htbp] 
\centering \includegraphics[width=1.00\linewidth]{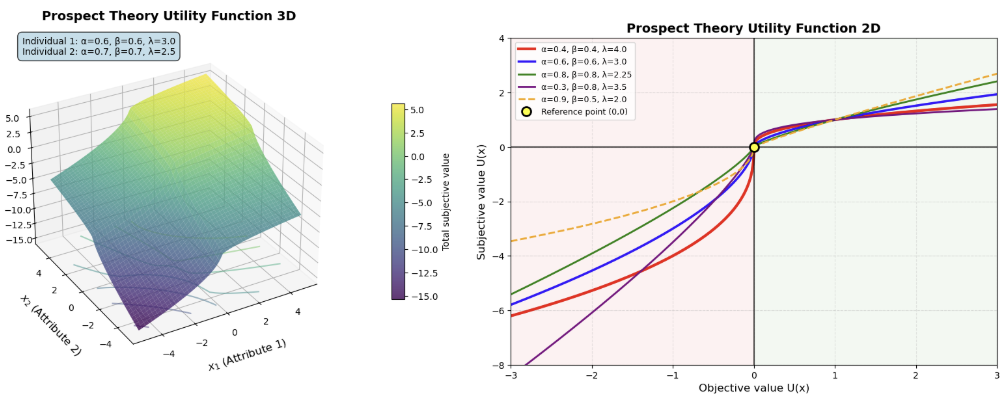} 
\caption{Function $V$ in \eqref{pvalue:func}. A 3D plot of $V$ (left), and 2D plots for different parameters (right).} \label{fig:01}
\end{figure}
\begin{figure}[htbp] 
\centering \includegraphics[width=1.0\linewidth]{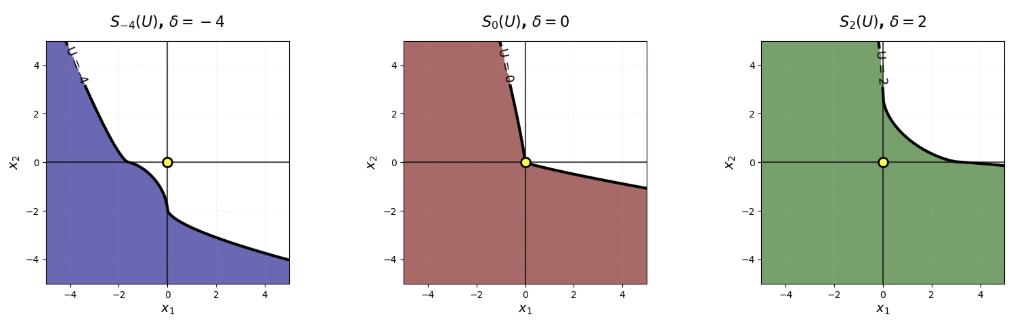} 
\caption{An illustration of the sublevel sets at height $\delta = -4, 0, 2$ of function $V$ des\-cri\-bed in \eqref{pvalue:func}. All sublevel sets are star shaped w.r.t. $(\overline{x}_{1}, \overline{x}_{2}) = (-5, -5)$.} \label{fig:02}
\end{figure}
\end{example}

Formally, we can prove the following result on compact convex sets. Its proof follows the argument of Example \ref{ex:PT}, so it is omitted.

\begin{corollary}{\bf (Prospect Theory with Probability Weighting)} \label{cor:PTweights}
Let $X = \prod_{i=1}^{n} X_i$ with every $X_i = [a_{i}, b_{i}] \subseteq \mathbb{R}$ be compact convex sets with $a_{i} < b_{i}$ for every $i \in \{1, \ldots, n\}$. Let $u_i: X_i \to \mathbb{R}$ be defined as in \eqref{W:example}, and $\pi_i > 0$ be decision weights derived from a probability weighting function $w: [0,1] \to [0,1]$. Define the functions 
$$h_i(x_i) = \pi_i u_i(x_i) ~~ {\rm and} ~~ V(x) = \sum_{i=1}^{n} h_i (x_i).$$ 
If every $u_i$ is (strongly) star quasiconvex with modulus $\gamma_i \geq 0$ with respect to $a_i \in {\rm argmin}_{X_i} u_i$, then the following assertions hold:
\begin{enumerate}
 \item[$(a)$] Every $h_i$ is (strongly) star quasiconvex with modulus $\pi_i \gamma_i \geq 0$ with respect to $a_i$.

 \item[$(b)$] $V$ is (strongly) star quasiconvex with modulus $\gamma = \min_{i} \{\pi_i \gamma_i\}$ with respect to $\overline{x} = (a_1, \ldots, a_n)$.
\end{enumerate}
\end{corollary}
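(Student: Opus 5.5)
Part $(a)$ is a direct scaling argument from Definition \ref{ss:quasiconvex}. Fix $i$ and note first that $a_i \in {\rm argmin}_{X_i} u_i$ and $\pi_i > 0$ give $a_i \in {\rm argmin}_{X_i} h_i$, because multiplying by a positive constant preserves the order of function values. Next take the defining inequality for $u_i$: for all $y \in X_i$ and $\lambda \in [0,1]$,
\begin{equation*}
 u_i(\lambda a_i + (1-\lambda) y) \leq u_i(y) - \lambda(1-\lambda)\frac{\gamma_i}{2}\lvert y - a_i\rvert^{2}.
\end{equation*}
Multiplying through by $\pi_i > 0$ yields exactly the star quasiconvexity inequality for $h_i$ with respect to $a_i$, with modulus $\pi_i \gamma_i$. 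This proves $(a)$.

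For part $(b)$, the plan is to reduce to a common modulus and then apply Theorem \ref{th:sepa}. Set $\gamma := \min_i \{\pi_i \gamma_i\}$, so $0 \le \gamma \le \pi_i \gamma_i$ for every $i$. Since $\lambda(1-\lambda)\lvert y-a_i\rvert^{2} \ge 0$, we have
\begin{equation*}
 -\lambda(1-\lambda)\frac{\pi_i\gamma_i}{2}\lvert y-a_i\rvert^{2} \le -\lambda(1-\lambda)\frac{\gamma}{2}\lvert y-a_i\rvert^{2}.
\end{equation*}
Hence star quasiconvexity with modulus $\pi_i\gamma_i$ implies star quasiconvexity with the smaller modulus $\gamma$. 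So every $h_i$ is star quasiconvex with the common modulus $\gamma$ with respect to $a_i$.

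It remains to verify the minimizer hypothesis of Theorem \ref{th:sepa}. Because $V$ is separable, $\overline{x} = (a_1,\ldots,a_n)$ minimizes $V$ over $X$ whenever each $a_i$ minimizes $h_i$ over $X_i$, as observed at the start of the proof of Theorem \ref{th:sepa}. The direction $(\Leftarrow)$ of Theorem \ref{th:sepa} then gives that $V$ is star quasiconvex with modulus $\gamma$ with respect to $\overline{x}$.

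None of these steps is a real obstacle. The only point needing care is the reduction to a common modulus. Theorem \ref{th:sepa} requires the same $\gamma$ for every component, so the individual moduli $\pi_i\gamma_i$ must be weakened to their minimum before summing. Alternatively, one can sum the individual inequalities directly and bound $\sum_i \pi_i\gamma_i \lvert y_i-a_i\rvert^{2}$ below by $\gamma \lVert \widehat{y}-\overline{x}\rVert^{2}$; this reproduces the $(\Leftarrow)$ argument of Theorem \ref{th:sepa} verbatim.
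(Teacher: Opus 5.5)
Your proposal is correct and follows essentially the paper's own argument, which the paper omits and refers back to Example \ref{ex:PT}: there each $u_i$ is multiplied by $\pi_i>0$ to get modulus $\pi_i\gamma_i$, and Theorem \ref{th:sepa} is then applied with the common modulus $\min_i\{\pi_i\gamma_i\}$. Your explicit weakening of each modulus to this common value, and your check that $(a_1,\ldots,a_n)$ minimizes $V$, simply spell out steps the paper leaves implicit.
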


The following remark shows that the components of a star quasiconvex function may not be necessarily continuous.

\begin{remark}
In \cite[Theorem 1]{DK}, the authors proved that if $h = \sum_{i} h_{i}$ is 
quasiconvex, then all components $h_{i}$ are continuous. This result cannot be 
extended to star quasiconvex functions, as the following example shows. Let $h: \mathbb{R}^{2} \rightarrow \mathbb{R}$ be given by $h(x_1, x_2) =
 h_1(x_1) + h_2(x_2)$, where 
\[
h_1(t) =
\begin{cases}
0 & {\rm if } ~ t = 0, \\
1 & {\rm if}  ~ t \neq 0,
\end{cases}
\quad {\rm and} \quad 
h_2(t) = t^2.
\]
Clearly, $h_{1}$ is quasiconvex, while $h_{2}$ is convex. Hence, it satisfies the necessary condition of the Debreu-Koopmans theorem (at maximum one nonconvex component). However, $h$ is not quasiconvex (take $x = (0, 1)$ and $y=(0.1, 0)$, then $h(x)=1$, $h(y) = 1 + \varepsilon^2$, but $h(\frac12 x + \frac12 y) = 1 + \frac14 > 1$, violating quasiconvexity).

On the other hand, $h$ is star quasiconvex with respect to $\overline{x} = (0, 0)$ by Corollary \ref{DK:problem}, even when $h_1$ is discontinuous at $0$. 
\end{remark}

\subsection{Monotonic Transformations}

For the next result, and in order to avoid a differentiability assumption, we will use tools from nonconvex optimization and nonsmooth analysis. To that end, let $g: \, ]a, b[ \, \rightarrow \mathbb{R}$ be a function. Then the lower Dini directional derivative of $g$ at $x \in \, ]a, b[$ is defined as
\begin{equation*}\label{lower:dini}
 g^{D-} (x) = \liminf_{t \rightarrow 0^{+}} \frac{g(x+t) - g(x)}{t},
\end{equation*}
where $t \rightarrow 0^{+}$ means $t \to 0$ with $t>0$.

The following mean value theorem holds.

\begin{theorem}\label{MVT} {\rm (\cite[Theorem I.3.1]{DeRu})}
 Let $g: I \, \rightarrow \mathbb{R}$ be a continuous function on some open interval $I \subseteq \mathbb{R}$. Let $a, b \in I$ and $\mu := \inf_{t \in [a, b]} g^{D-} (t)$. Then, 
 $$g(b)-g(a) \geq \mu (b-a).$$
\end{theorem}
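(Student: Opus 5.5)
The plan is the classical ``rising sun'' or first-crossing argument, applied to an affinely tilted version of $g$. I will assume $a<b$, so that $[a,b]$ is a genuine interval; the case $a=b$ is trivial. If $\mu=-\infty$ there is nothing to prove. Otherwise, fix an arbitrary real number $m<\mu$. I will show $g(b)-g(a)\geq m(b-a)$, and then let $m\uparrow\mu$ to conclude.

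For fixed $m$, define the auxiliary function $\varphi(t):=g(t)-g(a)-m(t-a)$ on $I$. It is continuous and satisfies $\varphi(a)=0$. Since the Dini derivative of an affine perturbation shifts by the slope, $\varphi^{D-}(t)=g^{D-}(t)-m\geq \mu-m>0$ for every $t\in[a,b]$. The goal is reduced to $\varphi(b)\geq 0$.

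Suppose instead $\varphi(b)<0$. Let $c:=\sup\{t\in[a,b]:\varphi(t)\geq 0\}$; this set is nonempty because it contains $a$. Continuity of $\varphi$ gives $\varphi(c)\geq 0$, hence $c<b$, and by the definition of the supremum $\varphi(t)<0$ for all $t\in\,]c,b]$. On the other hand, $\varphi^{D-}(c)>0$ means $\liminf_{t\to0^+}(\varphi(c+t)-\varphi(c))/t>0$. So there is an arbitrarily small $t>0$ with $c+t\leq b$ and $\varphi(c+t)>\varphi(c)\geq 0$, which contradicts the negativity of $\varphi$ on $]c,b]$. Hence $\varphi(b)\geq0$, that is, $g(b)-g(a)\geq m(b-a)$.

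The only delicate step is this supremum argument. It is the one place where continuity is used: it guarantees $\varphi(c)\geq 0$, and without it the last point where $\varphi$ is nonnegative need not exist. The step also requires the strict inequality $m<\mu$, so that the liminf is strictly positive and yields a point where $\varphi$ strictly increases. This is why I work with $m<\mu$ instead of $\mu$ directly.

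Finally, I let $m$ vary. If $\mu$ is finite, letting $m\uparrow\mu$ gives the claim. If $\mu=+\infty$, the inequality $g(b)-g(a)\geq m(b-a)$ for every real $m$ is impossible for finite values with $b>a$, so this case cannot occur and the statement holds vacuously.
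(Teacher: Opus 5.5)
The paper gives no proof of this statement; it is imported directly from \cite[Theorem I.3.1]{DeRu}. Your argument is therefore a self-contained addition rather than a reconstruction of the paper's proof, and it is correct. It is the standard first-crossing argument.
\begin{itemize}
\item The tilt $\varphi(t)=g(t)-g(a)-m(t-a)$ shifts the lower Dini derivative by exactly $-m$, including when $g^{D-}=\pm\infty$.
\item Continuity makes $\{t\in[a,b]:\varphi(t)\ge 0\}$ closed, so the last crossing point $c$ satisfies $\varphi(c)\ge 0$.
\item $\varphi^{D-}(c)>0$ forces $\varphi(c+t)>\varphi(c)\ge 0$ for small $t>0$, which contradicts $\varphi<0$ on $]c,b]$.
\end{itemize}
Your treatment of the infinite cases is also right; in particular, noting that $\mu=+\infty$ cannot occur when $a<b$ is a nice touch.

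Compared with a bare citation, your proof makes visible the two hypotheses the paper leaves implicit.
\begin{itemize}
\item Continuity is used only to get $\varphi(c)\ge 0$, and it cannot be dropped. For example, $g=0$ on $]-\infty,1[$ and $g=-1$ on $[1,+\infty[$ has $g^{D-}\equiv 0$, yet $g(2)-g(0)=-1$.
\item The restriction $a\le b$ is not a mere convenience. If $[a,b]$ were read as the segment between the points when $a>b$, the inequality would fail: take $g(t)=t^{2}$, $a=1$, $b=0$, so $\mu=0$ but $g(0)-g(1)=-1$.
\end{itemize}
The second point is consistent with how the result is used in Theorem~\ref{prop:comp}. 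There $a=h(\lambda\overline{x}+(1-\lambda)x)\le h(x)=b$ holds precisely because of star quasiconvexity.
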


If the function $g$ in the previous theorem is nondecreasing, then obviously
$g^{D-} (t) \geq 0$ for all $t \in [a, b]$, so $m \geq 0$.

\medskip

We also consider the following straightforward fact:
\begin{itemize}
 \item[{\rm {\bf Fact 1}}] Let $g$ be an increasing function. Then, $\overline{x} \in {\rm argmin}_K h$ if and only if $\overline{x} \in {\rm argmin}_K (g \circ h)$.
\end{itemize} 

\begin{theorem}\label{prop:comp} {\bf (Monotonic Composition Theorem)}
 Let $K \subseteq \mathbb{R}^{n}$ be an open convex set, $h: K \rightarrow \mathbb{R}$ be a (strongly) star quasiconvex function with modulus $\gamma \geq 0$ with respect to $\overline{x} \in {\rm argmin}_{K}\,h$, and $g$ be a increasing continuous function. Let $\mu := \inf_{t \in h(K)} g^{D-} (t)$. Then the function $g \circ h$ is (strongly) star quasiconvex with modulus $\mu \gamma \geq 0$ with respect to $\overline{x} \in {\rm argmin}_{K}(g \circ h)$.
\end{theorem}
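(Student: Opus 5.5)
By Fact 1, $\overline{x}$ is a minimizer of $g\circ h$ on $K$, since $g$ is increasing. So it remains to verify the defining inequality of Definition \ref{ss:quasiconvex} for $g\circ h$ with modulus $\mu\gamma$. Fix $y\in K$ and $\lambda\in[0,1]$, and write $z_\lambda:=\lambda\overline{x}+(1-\lambda)y$, which lies in $K$ by convexity. The star quasiconvexity of $h$ gives
\begin{equation*}
 a:=h(z_\lambda)\leq h(y)-\lambda(1-\lambda)\frac{\gamma}{2}\lVert y-\overline{x}\rVert^{2}\leq h(y)=:b .
\end{equation*}

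If $a=b$, then the middle term forces either $\gamma=0$, $\lambda\in\{0,1\}$ or $y=\overline{x}$. In each of these cases the desired inequality reduces to $g(a)\leq g(b)$, which is trivial.

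Otherwise $a<b$, and I apply the mean value inequality of Theorem \ref{MVT} to $g$ on $[a,b]$:
\begin{equation*}
 g(b)-g(a)\geq \inf_{t\in[a,b]}g^{D-}(t)\,(b-a)\geq \mu\,(b-a)\geq \mu\lambda(1-\lambda)\frac{\gamma}{2}\lVert y-\overline{x}\rVert^{2}.
\end{equation*}
The last step uses the first display together with $\mu\geq0$; the latter holds because $g$ is nondecreasing, so its lower Dini derivatives are nonnegative. Rearranging gives $g(h(z_\lambda))\leq g(h(y))-\lambda(1-\lambda)\frac{\mu\gamma}{2}\lVert y-\overline{x}\rVert^{2}$, which is exactly the claim.

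The main obstacle is the middle inequality: the infimum over $[a,b]$ must dominate $\mu$, which is defined as an infimum only over $h(K)$. I will handle it as follows. Both $a$ and $b$ belong to $h(K)$, and I claim that $[a,b]\subseteq h(K)$ when $h$ is continuous along the segment $[z_\lambda,y]$, by the intermediate value theorem. Without continuity I would instead read $\mu$ as an infimum over the convex hull of $h(K)$, i.e.\ over the interval $[\min h,\sup h]$. This is the natural reading, since $h(\overline{x})\leq a$ and $b\leq \sup_K h$. I would state this convention explicitly. A second point is that Theorem \ref{MVT} requires $g$ to be continuous on an open interval containing $[a,b]$; this holds if $g$ is defined and continuous on an open interval containing the convex hull of $h(K)$, which I will take to be implicit in the hypothesis.
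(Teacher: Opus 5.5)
Your argument is the paper's own: Fact~1 for the minimizer, then Theorem~\ref{MVT} on $[a,b]$ with $a=h(\lambda\overline{x}+(1-\lambda)y)$ and $b=h(y)$, then the star quasiconvexity inequality. The obstacle you isolate is real, and the paper's proof passes over it silently. It asserts $g(b)-g(a)\geq\mu(b-a)$ with $\mu=\inf_{h(K)}g^{D-}$, which needs $[a,b]\subseteq h(K)$, and for discontinuous $h$ this intermediate inequality can fail outright. Take $K=\mathbb{R}$, $h(x)=x^{2}$ for $|x|<1$ and $h(x)=x^{2}+10$ for $|x|\geq 1$; this $h$ is strongly star quasiconvex at $0$ with $\gamma=2$, and $h(K)=[0,1[\,\cup\,[11,+\infty[$. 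Let $g$ be piecewise linear with slope $1$ on $]-\infty,1]$ and on $[11,+\infty[$, and slope $\varepsilon$ on $[1,11]$. Then $\mu=1$, yet for $y=1$, $\lambda=\frac12$ one gets $g(b)-g(a)=\frac34+10\varepsilon$, against $\mu(b-a)=\frac{43}{4}$.

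So your two patches do not prove the statement as written. Continuity of $h$ along segments is an additional hypothesis. Taking the infimum over the convex hull of $h(K)$ can collapse the constant (here from $1$ to $\varepsilon$), so you would only be proving a weaker modulus.

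The statement is nevertheless true. The missing idea is to use the superadditive structure along rays instead of a single mean-value step across $[a,b]$. Fix a unit vector $u$, and set $\psi(r):=h(\overline{x}+ru)$ and $\Phi:=g\circ\psi$. With $y=\overline{x}+ru$ and $s=(1-\lambda)r$, star quasiconvexity reads $\psi(r)-\psi(s)\geq\frac{\gamma}{2}s(r-s)$ for $0\leq s\leq r$. The goal is $\Phi(r)-\Phi(s)\geq\frac{\mu\gamma}{2}s(r-s)$. In particular $\Phi$ is nondecreasing. Taking $s=t$, $r=t+\delta$ and using monotonicity of $g$ gives, for $\gamma t>0$,
\[
\Phi^{D-}(t)\geq\liminf_{\delta\to0^{+}}\frac{g\bigl(\psi(t)+\frac{\gamma}{2}t\delta\bigr)-g(\psi(t))}{\delta}=\frac{\gamma}{2}t\,g^{D-}(\psi(t))\geq\frac{\mu\gamma}{2}t .
\]
This evaluates $g^{D-}$ only at $\psi(t)\in h(K)$; for $\gamma t=0$ the bound $\Phi^{D-}(t)\geq 0$ is trivial.

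Now let $G_{\varepsilon}(\tau):=\Phi(\tau)-\frac{\mu\gamma}{4}\tau^{2}+\varepsilon\tau$, and let $T$ be the supremum of those $t\in[s,r]$ with $G_{\varepsilon}\geq G_{\varepsilon}(s)$ on $[s,t]$. Because $\Phi$ has only upward jumps, $G_{\varepsilon}(T)\geq G_{\varepsilon}(s)$. Since $G_{\varepsilon}^{D-}(T)\geq\varepsilon>0$, this forces $T=r$. Letting $\varepsilon\to0$ yields $\Phi(r)-\Phi(s)\geq\frac{\mu\gamma}{4}(r^{2}-s^{2})\geq\frac{\mu\gamma}{2}s(r-s)$. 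This is the claim for arbitrary $h$, and this route does not need Theorem~\ref{MVT} at all.
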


\begin{proof}
 Since $g$ is increasing, $\overline{x} \in {\rm argmin}_{K}\,h$ if and only if $\overline{x} \in {\rm argmin}_{K} (g \circ h)$ (by {\bf Fact 1}). Hence, for every $x \in K$ and every $\lambda \in [0, 1]$, we apply Theorem \ref{MVT} to $a := h(\lambda \overline{x} 
 + (1-\lambda)x)$ and $b:=h(x)$, thus
\begin{align*}
 g(h(x)) - g(h(\lambda \overline{x} + (1-\lambda) x)) & \geq \mu (h(x) - h(\lambda \overline{x} + (1 - \lambda) x)) \\
 & \geq \mu \lambda (1 - \lambda) \frac{\gamma}{2} \lVert x - \overline{x} 
 \rVert^{2},
\end{align*}
and the result follows.
\end{proof}

From this result, we can obtain the following.

\begin{corollary}\label{inverse}
 Let $K \subseteq \mathbb{R}^{n}$ be an open convex set and $h: K \rightarrow \mathbb{R}$ be a function. If $h$ is (strongly) star quasiconcave  with modulus 
 $\gamma \geq 0$ and either positive or negative on $K$, then $\frac{1}{h}$ is (strongly) star quasiconvex with the same modulus.
\end{corollary}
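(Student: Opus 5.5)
The plan is to reduce the statement to the Monotonic Composition Theorem (Theorem \ref{prop:comp}) by writing $\frac{1}{h}$ as an increasing transformation of the star quasiconvex function $-h$. By definition, $h$ being (strongly) star quasiconcave with modulus $\gamma$ means that $-h$ is (strongly) star quasiconvex with modulus $\gamma$ with respect to some $\overline{x} \in {\rm argmin}_{K}(-h) = {\rm argmax}_{K}\,h$. Set $g(t) := -\frac{1}{t}$, so that $\frac{1}{h} = g \circ (-h)$. We split into the two sign cases. If $h > 0$ on $K$, then $-h(K) \subseteq \, ]-\infty, 0[$, and on this half-line $g$ is continuous and increasing (its derivative is $\frac{1}{t^{2}} > 0$). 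If $h < 0$ on $K$, then $-h(K) \subseteq \, ]0, +\infty[$, and $g$ is again continuous and increasing there. The sign hypothesis is used exactly here: it keeps the range of $-h$ inside one connected component of the domain of $g$, so that $g$ is genuinely increasing on the set where it is applied.

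In either case, Fact 1 gives $\overline{x} \in {\rm argmin}_{K}\,\frac{1}{h}$. Theorem \ref{prop:comp}, applied with $g$ restricted to the relevant half-line (an open interval, as Theorem \ref{MVT} requires), then yields that $\frac{1}{h}$ is (strongly) star quasiconvex with respect to $\overline{x}$ with modulus $\mu\gamma$, where
\[
\mu = \inf_{t \in -h(K)} g^{D-}(t) = \inf_{x \in K} \frac{1}{h(x)^{2}} \geq 0 .
\]
The Dini derivative here is the ordinary derivative $\frac{1}{t^{2}}$, since $g$ is smooth on each half-line. For $\gamma = 0$ this already gives the statement exactly, because the composition theorem needs no modulus bookkeeping: $\frac{1}{h}$ is star quasiconvex, i.e.\ has modulus $0 = \gamma$.

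The main obstacle is the phrase ``with the same modulus'' when $\gamma > 0$. The composition theorem only delivers $\mu\gamma$. A direct computation shows the same thing: with $z = \lambda\overline{x} + (1-\lambda)y$ and $c = \lambda(1-\lambda)\frac{\gamma}{2}\lVert y-\overline{x}\rVert^{2}$, one has $h(z) \geq h(y) + c$, and hence (say for $h>0$)
\[
\frac{1}{h(z)} \leq \frac{1}{h(y)} - \frac{c}{h(y)\,(h(y)+c)} .
\]
So the gain in the inequality is $c$ times the factor $\frac{1}{h(y)(h(y)+c)}$. Since a larger modulus implies every smaller one, concluding modulus exactly $\gamma$ for $\gamma > 0$ amounts to showing $\mu \geq 1$, i.e.\ $|h| \leq 1$ on $K$. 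I would first try to extract this bound from the hypotheses. If that fails, the conclusion provable by this route is the modulus $\mu\gamma$: the strong case is preserved (with modulus $\gamma \inf_K h^{-2}$, which is positive whenever $h$ is bounded) and the $\gamma = 0$ case matches the statement exactly. I would flag that for $\gamma > 0$ the literal ``same modulus'' $\gamma$ requires $\mu \geq 1$, since without it the argument does not reach the statement as worded.
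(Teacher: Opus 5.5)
Your argument is correct and is essentially the paper's. The paper also takes $g(t)=-\frac{1}{t}$ on the half-line containing the relevant range, invokes Theorem~\ref{prop:comp}, and likewise obtains only the modulus $\mu\gamma$. The paper composes $g$ with $h$ itself and ends by calling $\frac{1}{h}$ star quasiconcave, which is the mirror statement; your composition with $-h$ is the orientation that matches the hypotheses of Theorem~\ref{prop:comp} and the stated conclusion.

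Your flag on ``the same modulus'' is justified, and you should not try to extract $|h|\le 1$ from the hypotheses: for $\gamma>0$ the literal claim is false. If $h$ satisfies the hypotheses with modulus $\gamma$, then $ch$ ($c>0$) satisfies them with modulus $c\gamma$. The claim would then make $\frac{1}{h}=c\cdot\frac{1}{ch}$ star quasiconvex with modulus $c^{2}\gamma$ for every $c>0$, which is impossible at any $y\neq\overline{x}$ as $c\to\infty$. Concretely, $h(x)=100-x^{2}$ on $]-1,1[$ has $\gamma=2$, yet $\frac{1}{h}$ violates modulus $2$ at $y=0.9$, $\lambda=\frac{1}{2}$.
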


\begin{proof}
 If $h$ is positive, then we consider $g: ]0, + \infty[ \, \to \, ]0, + \infty[$, and if $h$ is negative, we consider $g: ]- \infty, 0[ \, \to \, ]- \infty, 0[$, in both cases, we take $g(x) = - \frac{1}{x}$.

 Now, observe that $\frac{1}{h} = - g \circ h$. Since $g^{\prime} (x) = \frac{1}{x^{2}} > 0$ for all $x \neq 0$, $g$ is increasing over $h(K)$. Hence, by Theorem  \ref{prop:comp}, $g \circ h = - \frac{1}{h}$ is (strongly) star quasiconvex with modulus $\mu \gamma \geq 0$, where $\mu = \inf_{t \in h(K)} g^{D-} (t)$. Therefore, $\frac{1}{h}$ is (strongly) star quasiconcave with modulus $\mu \gamma \geq 0$.
\end{proof}

\begin{remark}
 \begin{itemize} 
  \item[$(i)$] If $g$ is differentiable and increasing, then $\mu \geq 0$ is the infimum of its derivative.

  \item[$(ii)$] The case $\mu \gamma=0$ is possible. This can happen if $h$ is star quasiconvex ($\gamma=0$), or if $\mu=0$, which may happen if $g$ has flat parts or, for instance, if $g(t) = \ln t$, $t \in \, ] 0, + \infty[$. If $h$ is defined on a bounded set $K$, $h(K)$ might be
  bounded, so for $g(t) = \ln t$, we have $\mu > 0$. Then, on bounded sets, $g \circ h$ is strongly star quasiconvex if $h$ is strongly star quasiconvex ($\gamma > 0$).

 \item[$(iii)$] Corollary \ref{inverse} says that the multiplicative inverse of a (strongly) star quasiconvex function is (strongly) star quasiconcave, as well as the inverse of a (strongly) star quasiconcave function is (strongly) star quasiconvex. Note that convex/concave functions does not have this relationship with their inverse as the function $h(x) = e^{x}$ shows.
 \end{itemize}
\end{remark}

Another interesting calculus rule is given below. We emphasize that the following result holds for $\gamma = 0$.

\begin{proposition}\label{prop:leo}
 Let $X = \prod^{n}_{i=1} X_{i}$, where $X_{i} = [a_{i}, b_{i}]$ with $0 < a_{i} < b_{i}$ for every $i \in \{1, \ldots, n\}$, and $h: X \rightarrow \mathbb{R}$ be given by 
 \begin{equation}\label{min:sep}
  h (x_{1}, \ldots, x_{n}) = \min_{i=1,\dots,n} h_i(x_i),
 \end{equation} 
 where every $h_i: X_{i} \to \mathbb{R}$ is a star quasiconvex function with respect to $a_i \in {\rm argmin}_{ X_{i}} \,h_{i}$. Then $h$ is star quasiconvex with respect to $\overline{x} = (a_1, \dots, a_n)$.
\end{proposition}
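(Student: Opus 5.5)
The plan is to argue directly from Definition \ref{ss:quasiconvex} with $\gamma = 0$, in two steps: first show that $\overline{x} = (a_1, \ldots, a_n)$ is a minimizer of $h$ on $X$, and then check the star quasiconvexity inequality componentwise. The condition $0 < a_i < b_i$ only guarantees that each $X_i$ is a nondegenerate interval; it plays no essential role beyond that.

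\emph{Step 1 ($\overline{x}$ is a minimizer).} Take any $x = (x_1, \ldots, x_n) \in X$. Since $a_i \in {\rm argmin}_{X_i}\, h_i$, we have $h_i(a_i) \leq h_i(x_i)$ for every $i$. Taking the minimum over $i$ on both sides preserves the inequality, because the map $(t_1,\dots,t_n) \mapsto \min_i t_i$ is nondecreasing in each coordinate. Hence
\[
h(\overline{x}) = \min_i h_i(a_i) \leq \min_i h_i(x_i) = h(x),
\]
so $\overline{x} \in {\rm argmin}_X\, h$. This step differs from the additive case of Theorem \ref{th:sepa}: the converse (a minimizer of $h$ has minimizing components) fails for a minimum. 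We do not need the converse, since the statement fixes $\overline{x}$.

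\emph{Step 2 (the star inequality).} Fix $y = (y_1, \ldots, y_n) \in X$ and $\lambda \in [0,1]$. The point $\lambda \overline{x} + (1-\lambda) y$ has $i$-th coordinate $\lambda a_i + (1-\lambda) y_i \in X_i$. Star quasiconvexity of each $h_i$ with respect to $a_i$ (modulus $0$) gives
\[
h_i(\lambda a_i + (1-\lambda) y_i) \leq h_i(y_i) \quad \text{for all } i.
\]
Applying coordinatewise monotonicity of the minimum again yields
\[
h(\lambda \overline{x} + (1-\lambda) y) = \min_i h_i(\lambda a_i + (1-\lambda) y_i) \leq \min_i h_i(y_i) = h(y),
\]
which is the required inequality. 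Equivalently, via \cite[Theorem 9]{KL}, $S_\delta(h) = \bigcup_i \{x \in X : h_i(x_i) \leq \delta\}$ is a union of cylinder sets. Each of these is star-shaped at $\overline{x}$ whenever it is nonempty, since then $h_i(a_i) \leq \delta$. A union of sets star-shaped at a common point is star-shaped at that point, which gives an alternative proof.

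There is no genuine obstacle. The only delicate point, and the reason the statement is restricted to $\gamma = 0$, is that the argument does not extend to a positive modulus. Let $i^*$ be the index attaining the minimum at $y$. Then only $h_{i^*}$ contributes a strong term, of size $\lambda(1-\lambda)\frac{\gamma}{2}\lVert y_{i^*} - a_{i^*}\rVert^2$, and this cannot in general dominate $\lambda(1-\lambda)\frac{\gamma}{2}\lVert y - \overline{x}\rVert^2$. I would add a brief remark making this explicit.
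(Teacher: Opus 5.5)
Your proof is correct and follows essentially the paper's argument. The paper fixes an index $i_0$ attaining the minimum at $y$ and chains $\min_i h_i(\lambda a_i+(1-\lambda)y_i)\le h_{i_0}(\lambda a_{i_0}+(1-\lambda)y_{i_0})\le h_{i_0}(y_{i_0})=h(y)$, which is exactly your coordinatewise-monotonicity step; your Step 1, checking that $\overline{x}\in{\rm argmin}_X\,h$ as the definition requires, fills in a detail the paper leaves implicit.
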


\begin{proof}
 Since every $h_i$ is star quasiconvex with respect to $a_i \in {\rm argmin }_{X_{i}}\,h_{i}$, we have for every $y_i \in X_{i}$ and every $\lambda \in [0,1]$ that $h_i (\lambda a_i + (1-\lambda) y_i) \le h_i (y_i)$. 
 Fix $y = (y_1,\dots,y_n)$ and let $i_0$ be an index where the minimum is attained, $h(y) = \min_i h_i(y_i) = h_{i_0} (y_{i_0})$. Then,
\begin{align*}
 h(\lambda \overline{x} + (1-\lambda)y) 
 = \min_i h_i (\lambda a_i + (1-\lambda) y_i) & \leq h_{i_0} (\lambda a_{i_0} + (1-\lambda) y_{i_0}) \\
 & \le h_{i_0} (y_{i_0}) = h(y).
\end{align*}
Hence $h$ is star quasiconvex with respect to $\overline{x} = (a_1, \dots, a_n)$.
\end{proof}

In particular, the Leontief \cite{Leon} production/utility function (perfect complements) is star quasiconvex.

\begin{example}\label{Leon:func}
{\bf (Star Quasiconvexity of Leontief Function)} 
Let $\alpha > 0$ and $\alpha_{i} > 0$ for all $i \in \{1, \ldots, n\}$. 
Then the Leontief (see \cite{Leon}) production/utility function  $h_{L}: \mathbb{R}^{n}_{+} \rightarrow \mathbb{R}_{+}$ is given by 
\begin{equation}\label{Leontief}
  h_{L}(x) = \min\left\{ \frac{1}{\alpha_{1}} x_{1}, \ldots, \frac{1}{\alpha_{n}} x_{n} \right\}^{\alpha}.
\end{equation}
It is known that $h_{L}$ is quasiconcave, and it is concave if and only if 
$0 < \alpha \leq 1$ (see, for instance, \cite[Theorem 2.4.3]{CM-Book}).

Note that on any convex bounded domain $X = \prod_{i=1}^n [a_i, b_i]$ with $0 < a_i < b_i < + \infty$ for every $i \in \{1, \ldots, n\}$, the function $h_L$ attains its minimum at $\overline{x} = (a_1, \dots, a_n)$. 

We can write $h_L(x) = g(h(x))$ where
\[
 h(x) = \min\{ h_1(x_1), \dots, h_n(x_n) \},\quad 
h_i(x_i) = \frac{1}{\alpha_i} x_i,\quad 
g(t) = t^\alpha.
\]
Since every $h_i$ is linear (hence star quasiconvex with respect to $a_i$ with modulus $0$), we have by Proposition \ref{prop:leo} that $h$ is star quasiconvex with respect to $\overline{x} = (a_1, \dots, a_n)$. Furthermore, since $g$ is increasing and continuous for every $\alpha > 0$ on $\mathbb{R}_{+}$, using Theorem \ref{prop:comp} we conclude that $h_L = g \circ h$ is star quasiconvex with respect to $\overline{x}$ on any convex bounded domain (for every $\alpha > 0$).
\end{example}


\subsection{Product Separable Star Quasiconvexity}

Using Theorems \ref{th:sepa} and \ref{prop:comp}, we provide sufficient conditions for star quasiconvexity of product separable star quasiconvex functions.

\begin{theorem} {\bf (Product Separable Star Quasiconvexity)} \label{th:separable-product}
Let $X = \prod_{i=1}^m X_i$, where each $X_i \subseteq \mathbb{R}^{n_i}$ is a convex set with $n_i \geq 1$ and $\sum^{m}_{i} n_{i} = n$, and $H: X \rightarrow \mathbb{R}$ be defined by
\begin{equation}\label{sep:product-gen}
 H (x_1,\dots,x_m) = \prod_{i=1}^m h_i(x_i),
\end{equation}
where $h_i: X_i \rightarrow \mathbb{R}$ satisfy $h_i(x_i) > 0$ for all $x_i \in X_i$ and all $i = 1,\dots,m$. Let $\overline{x} = (\overline{x}_1,\dots,\overline{x}_m) \in {\rm  argmin}_{X}\,H$. Then the following assertions hold:
\begin{itemize}
 \item[$(a)$] $H$ is star quasiconvex with respect to $\overline{x}$ if and only if every $h_i$ is star quasiconvex with respect to $\overline{x}_{i}$.

 \item[$(b)$] If every function $f_i := \ln h_i$ is (strongly) star quasiconvex with modulus $\gamma \geq 0$ with respect to $\overline{x}_{i}$, then $H$ is (strongly) star quasiconvex with mo\-du\-lus $\gamma_{H} = \gamma H(\overline{x})$ with respect to $\overline{x}$.
    
 \item[$(c)$] If every $h_i$ is (strongly) star quasiconvex with modulus $\gamma_i \geq 0$ with respect to $\overline{x}_{i}$, and there exists $M_{i} > 0$ such that $h_i(y_i) \leq M_i$ for all $y_i \in X_i$, then $H$ is (strongly) star quasiconvex with respect to $\overline{x}$ with modulus 
 $$\gamma_{H} \geq H(\overline{x}) \left( \min_{1 \leq i \leq m} \frac{\gamma_i}{M_i} \right).$$
\end{itemize}
\end{theorem}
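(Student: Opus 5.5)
The plan is to reduce everything to the additive case through the logarithm, using Theorem \ref{th:sepa} together with the Monotonic Composition Theorem \ref{prop:comp} in both directions. Since every $h_i>0$, the function $\ln H = \sum_{i=1}^m \ln h_i$ is additively separable with components $f_i := \ln h_i$. Because $\ln$ and $\exp$ are increasing, Fact 1 shows that $\overline{x}\in{\rm argmin}_X H$ if and only if $\overline{x}\in{\rm argmin}_X \ln H$. By separability, this holds if and only if $\overline{x}_i \in {\rm argmin}_{X_i} h_i$ for every $i$.

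For $(a)$, with $\gamma=0$, the key observation is that composing with an increasing function preserves the inequality $\phi(\lambda\overline{x}+(1-\lambda)y)\le \phi(y)$ in both directions. So star quasiconvexity of $h_i$ at $\overline{x}_i$ is equivalent to star quasiconvexity of $f_i$. This can be seen either directly or via Theorem \ref{prop:comp} with $\mu\gamma=0$; the direct route avoids the openness hypothesis. By Theorem \ref{th:sepa}, the $f_i$ are all star quasiconvex if and only if $\ln H$ is, and that is equivalent to $H=\exp(\ln H)$ being star quasiconvex. This closes the chain of equivalences.

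For $(b)$, Theorem \ref{th:sepa} gives that $\ln H$ is strongly star quasiconvex with modulus $\gamma$ at $\overline{x}$. I would then apply the mean value argument of Theorem \ref{prop:comp} with $g=\exp$. Taking $a=\ln H(\lambda\overline{x}+(1-\lambda)y)$ and $b=\ln H(y)$, we have $\inf_{t\in[a,b]} g^{D-}(t)=e^{a}\ge e^{\ln H(\overline{x})}=H(\overline{x})$, since $a\ge \min \ln H$. Hence
\[
H(y)-H(\lambda\overline{x}+(1-\lambda)y)\ \ge\ H(\overline{x})\,\lambda(1-\lambda)\tfrac{\gamma}{2}\lVert y-\overline{x}\rVert^2 .
\]
This yields the modulus $\gamma H(\overline{x})$. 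Only the pointwise lower bound on $\exp'$ over the relevant interval is used, so Theorem \ref{MVT} suffices: $\exp$ is smooth on the open interval $\mathbb{R}$, and no openness of $X$ is required.

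For $(c)$, the first step is to convert each $h_i$ into $f_i=\ln h_i$ with a controlled modulus. Apply Theorem \ref{MVT} to $\ln$ on $[h_i(\lambda\overline{x}_i+(1-\lambda)y_i),\,h_i(y_i)]\subseteq\,]0,M_i]$. There $\ln' (t)=1/t\ge 1/M_i$, so $f_i$ is strongly star quasiconvex with modulus $\gamma_i/M_i$, and hence with the common modulus $\min_i \gamma_i/M_i$. This uses only that the modulus inequality is monotone in $\gamma$. Part $(b)$ then gives modulus $H(\overline{x})\min_i\gamma_i/M_i$ for $H$.

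The main obstacle is bookkeeping rather than ideas. Theorem \ref{prop:comp} is stated on open $K$ with the infimum over all of $h(K)$, whereas we need only the infimum over the interval between the two values and no openness. I will therefore invoke Theorem \ref{MVT} directly on the relevant compact subintervals instead of citing Theorem \ref{prop:comp} verbatim. I will also check that the intervals are nondegenerate or trivially handled when the endpoints coincide.
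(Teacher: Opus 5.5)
Your proof is correct and follows essentially the paper's route: pass to $\ln H=\sum_i \ln h_i$, apply Theorem \ref{th:sepa}, and return via $\exp$ with modulus factor $H(\overline{x})$, with $(c)$ reduced to $(b)$ by showing that $\ln h_i$ has modulus $\gamma_i/M_i$. The differences are minor. The paper proves $(a)$ by directly multiplying (resp. dividing by) the positive factors rather than going through logarithms, and gets the $\gamma_i/M_i$ bound in $(c)$ from $\ln(1-t)\le -t$ rather than the mean value theorem for $\ln$. In $(b)$ it cites Theorem \ref{prop:comp}, whereas your direct use of Theorem \ref{MVT} on $[a,b]$ is slightly more careful, since it avoids that theorem's openness assumption on the domain.
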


\begin{proof}
Since $h_i > 0$, the function $F(x) := \ln H(x) = \sum_{i=1}^m \ln h_i(x_i)$ is well-defined. As $\ln (\cdot)$ is increasing, $\overline{x}$ minimizes $H$ if and only if it minimizes $F$ by {\bf Fact 1}. Hence, for the additive function $F$, minimization occurs componentwise: $\overline{x}$ minimizes $F$ 
if and only if every $\overline{x}_i$ minimizes $\ln h_i$. Again, since $\ln (\cdot)$ is increasing, this is equivalent to $\overline{x}_i$ minimizing $h_i$. Therefore,  $\overline{x}_i \in {\rm argmin}_{X_i} h_i$ for all $i \in \{1, \ldots, m\}$.

$(a)$: $(\Leftarrow)$ Suppose that every $h_i$ is star quasiconvex. Then for every $y_{i} \in X_{i}$ and every $\lambda \in [0,1]$, star quasiconvexity of $h_i$ implies
\[
h_i(\lambda \overline{x}_i + (1-\lambda)y_i) \leq h_i(y_i).
\]
Since $h_i > 0$ for all $i$, multiplying these inequalities for every $i = 1,\dots,m$ and taking $y = (y_1,\dots,y_m) \in X$ yields
\[
 H(\lambda \overline{x} + (1-\lambda)y) = \prod_{i=1}^m h_i(\lambda \overline{x}_i + (1-\lambda)y_i) \leq \prod_{i=1}^m h_i(y_i) = H(y).
\]
Thus $H$ is star quasiconvex with respect to $\overline{x}$.

$(\Rightarrow)$ Suppose that $H$ is star quasiconvex. Fix $i_0 \in \{1,\dots,m\}$ and take $y \in X_{i_0}$. Define $z = (z_1,\dots,z_m) \in X$ by $z_j = \overline{x}_j$ for $j \neq i_0$ and $z_{i_0} = y$. Then for every $\lambda \in [0,1]$, we have
\begin{eqnarray*}
 H (\lambda \overline{x} + (1-\lambda)z) & \leq & H(z) \\ 
 \Longleftrightarrow \left( \prod_{j \neq i_0} h_j(\overline{x}_j) \right) h_{i_0}(\lambda \overline{x}_{i_0} + (1-\lambda)y) & \leq & \left( \prod_{j \neq i_0} h_j(\overline{x}_j) \right) h_{i_0}(y).
\end{eqnarray*}
Since $\prod_{j \neq i_0} h_j(\overline{x}_j) > 0$, we obtain $h_{i_0}(\lambda \overline{x}_{i_0} + (1-\lambda)y) \leq h_{i_0}(y)$. Hence, $h_{i_0}$ is star quasiconvex with respect to $\overline{x}_{ i_{0}}$. Since $i_0$ was arbitrary, every $h_i$ is star quasiconvex with respect to $\overline{x}_{i}$.

$(b)$: For the function $F = \ln H$, we use Theorem \ref{prop:comp} with $g(t) = e^t$. Since $g$ is increasing and differentiable with $g'(t) = e^t$, we have $g^{D-}(t) = e^t$. Furthermore, the infimum of $g^{D-}$ on $F(X)$ is
$$m = \inf_{t \in F(X)} e^t = \exp\left(\inf F(X) \right) = \exp(F(\overline{x})) = H (\overline{x}) > 0,$$
because $h_{i} > 0$ on $X_{i}$ for all $i \in \{1, \ldots, m\}$ by assumption.

Since every $f_i = \ln h_i$ is (strongly) star quasiconvex with respect to $\overline{x}_{i}$ with modulus $\gamma \geq 0$, it follows from Theorem \ref{th:sepa} that $F$ is (strongly) star quasiconvex with the same modulus $\gamma \geq 0$, and by Theorem \ref{prop:comp}, $H = g \circ F$ is (strongly) star quasiconvex with respect to $\overline{x}$ with modulus
$$ m \gamma = H(\overline{x}) \gamma \geq 0.$$

$(c)$: Suppose that every $h_i$ is (strongly) star quasiconvex with modulus $\gamma_i \geq 0$ with respect to $\overline{x}_{i}$ and $h_i(y_i) \leq M_i$ for all $y_i \in X_i$. Then for every $\lambda \in [0,1]$ and every $y_i \in X_i$,
\begin{align}
 & \hspace{0.9cm} h_i(\lambda \overline{x}_i + (1-\lambda)y_i) \leq h_i(y_i) - \lambda (1-\lambda) \frac{\gamma_i}{2}\|y_i-\overline{x}_i\|^2 \notag \\
 & \Longrightarrow ~ 0 < \frac{h_i(\lambda \overline{x}_i + (1-\lambda)y_i)}{h_{i} (y_{i})} \leq 1 - \lambda (1-\lambda) \frac{\gamma_i}{2 h_{i} (y_{i})}\|y_i-\overline{x}_i\|^2, \label{for:c}
\end{align}
because $h_i > 0$. 

Set $t := \frac{\lambda(1-\lambda)\gamma_i\|y_i-\overline{x}_i\|^2}{2h_i(y_i)}$. Then, it follows from \eqref{for:c} that $0 < t < 1$. Taking logarithms on both sides of \eqref{for:c}, 
$$\ln \left( \frac{h_i(\lambda \overline{x}_i + (1-\lambda)y_i)}{h_{i} (y_{i})} \right) \leq \ln(1-t),$$
and using $\ln(1-t) \leq -t$, we obtain 
$$ \ln h_{i} (\lambda \overline{x}_i + (1-\lambda)y_i) \leq \ln h_{i} (y_i) - \lambda(1-\lambda)\frac{\gamma_i}{2h_i(y_i)}\|y_i-\overline{x}_i\|^2.$$
Since $0 < h_i(y_i) \leq M_i$, we have $-\frac{\gamma_i}{h_i(y_i)} \leq - \frac{\gamma_i}{M_i}$, thus
 $$\ln h_{i} (\lambda \overline{x}_i + (1-\lambda) y_i) \leq \ln h_{i} (y_i) - \lambda (1 - \lambda) \frac{\gamma_i}{2M_i} \|y_i - \overline{x}_i\|^2.$$
 Then, every $\ln h_i$ is (strongly) star quasiconvex with respect to $\overline{x}_{i}$ with modulus at least $\gamma_i/M_i$. In particular, they are all (strongly) star quasiconvex with the same modulus $\gamma := \min_i (\gamma_i/M_i)$. Applying part $(b)$ (of this theorem) with $\gamma = \min_i \frac{\gamma_i}{M_i}$, we conclude that $H$ is (strongly) star quasiconvex with respect to $\overline{x}$ with modulus
 $$\gamma_{H} = H (\overline{x}) \gamma = H (\overline{x}) \left( \min_{1 \leq i \leq m} \frac{\gamma_i}{M_i} \right) \geq 0.$$
This completes the proof.
\end{proof}

\begin{remark}
 \begin{itemize}
  \item[$(i)$] The reverse statement in part $(b)$ does not hold in general. Indeed, take $m = 1$, $X = [1, 3]$, $\overline{x} = 1$, and $H(x) = x$. Then $h$ is strongly star quasiconvex with modulus $\gamma_h = 1$ with respect to $\overline{x} = 1$. In fact, for every $y \in [1,3]$ and every $\lambda \in [0,1]$, we have
 \begin{align*}
  & H(\lambda \overline{x} + (1-\lambda) y) = \lambda + (1-\lambda) y = y - \lambda(y-1), \\ 
  & H(y) - \lambda (1-\lambda) \frac{1}{2} (y-1)^2 = y - \lambda (1-\lambda) \frac{1}{2} (y-1)^2.
 \end{align*}
  Hence, $y - \lambda (y-1) \leq y - \lambda (1-\lambda) \frac{1}{2} (y-1)^2$ reduces to $1 \geq (1-\lambda) \frac{(y-1)}{2}$, which holds for all $y \in [1, 3]$ and all $\lambda \in [0, 1]$ with $\gamma_{H} = 1$.
    
  On the other hand, $F(x) = \ln H(x) = \ln x$ is not strongly star quasiconvex with modulus $\gamma_{H} / H (\overline{x}) = 1$. In fact, take $y = 3$ and $\lambda = 0.5$, thus
  \begin{align*}
   & F(\lambda \overline{x} + (1-\lambda) y) = \ln(2) \approx 0.69, \\
   & F(y) - \lambda(1-\lambda) \frac{1}{2} (y-1)^2 = \ln(3) - 0.25 \cdot 2 \approx 0.59.
  \end{align*}
  Since $0.69 > 0.59$, $F$ is not strongly star quasiconvex with modulus $1$.
    
  \item[$(ii)$] The boundedness assumption $h_i(y_i) \leq M_i$ in part $(c)$ is common in economics, and it holds immediately when domains $X_i$ are bounded (e.g., wealth or consumption bounded above). If $X_i$ is unbounded, one may still obtain a finite modulus $\gamma_{H}$; in this case, $M_i$ should be replaced by $\sup_{y_i \in X_i} h_i(y_i)$ (provided this supremum is finite).
 \end{itemize} 
\end{remark}

The following results shows that the Cobb-Douglas function \cite{CD} is star quasiconvex on compacts convex sets.

\begin{corollary} {\bf (Star Quasiconvexity of  Cobb-Douglas Function)}  \label{cor:cobb-douglas}
 Let $H: \mathbb{R}^n_+ \to \mathbb{R}$ be the Cobb-Douglas function \cite{CD} defined by
 \begin{equation}\label{cd:funct}
  H(x_1,\ldots,x_m) = A \prod_{i=1}^m x_i^{\alpha_i},
 \end{equation}
 where $A > 0$, $\alpha_i > 0$, and $m \ge 1$. Let $X = \prod_{i=1}^m X_i$ with $X_i = [a_i, b_i]$ where $0 < a_i < b_i < +\infty$. Then  $H$ is star quasiconvex on $X$ with respect to its mi\-ni\-mi\-zer $\overline{x} = (a_1,\ldots,a_m)$.
\end{corollary}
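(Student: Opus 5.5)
The plan is to write $H$ as a product of positive one-dimensional factors and apply Theorem \ref{th:separable-product}$(a)$. Since $A>0$ is a constant, I will absorb it into the first factor. Set
\[
h_1(x_1) := A\,x_1^{\alpha_1} \quad \text{and} \quad h_i(x_i) := x_i^{\alpha_i} \ \text{ for } i \in \{2,\ldots,m\}, \qquad x_i \in X_i = [a_i,b_i].
\]
Because $0<a_i$, each $h_i$ is strictly positive on $X_i$, and $H=\prod_{i=1}^m h_i$ on $X$. So $H$ has exactly the product-separable form \eqref{sep:product-gen} with positive components.

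Next I identify the minimizers. Since $\alpha_i>0$, each $t\mapsto t^{\alpha_i}$ is increasing on $]0,+\infty[$, and multiplying by $A>0$ preserves this. Hence $h_i$ is increasing on $[a_i,b_i]$, and ${\rm argmin}_{X_i} h_i=\{a_i\}$. By the componentwise characterization of minimizers established at the beginning of the proof of Theorem \ref{th:separable-product} (pass to $\ln H$ and use Fact 1), $\overline{x}=(a_1,\ldots,a_m)\in{\rm argmin}_X H$.

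It remains to check that each $h_i$ is star quasiconvex with respect to $a_i$. The quickest route is that $h_i$ is monotone on an interval, hence quasiconvex by \cite[Theorem 2.5.1]{CM-Book}; since it attains its minimum at $a_i$, it is star quasiconvex by Remark \ref{rem:relat}$(i)$. One can also verify this directly: for $y_i\in[a_i,b_i]$ and $\lambda\in[0,1]$ we have $\lambda a_i+(1-\lambda)y_i\le y_i$, so monotonicity gives $h_i(\lambda a_i+(1-\lambda)y_i)\le h_i(y_i)$. Theorem \ref{th:separable-product}$(a)$ (the direction $\Leftarrow$) then yields that $H$ is star quasiconvex with respect to $\overline{x}$.

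I do not expect a real obstacle. The only point needing care is that the corollary states the domain as $\mathbb{R}^n_+$ while the claim concerns $X$, so I will restrict $H$ to $X$ throughout. Positivity of the $h_i$ there, which is required by Theorem \ref{th:separable-product}, relies on $a_i>0$. Alternatively, one could use part $(b)$ with $\ln h_i=\ln A\cdot\delta_{i1}+\alpha_i\ln x_i$, which is increasing and hence star quasiconvex with modulus $0$; but part $(a)$ is more direct.
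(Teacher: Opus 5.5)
Your proposal is correct and follows essentially the same route as the paper: write $H$ as a product of positive factors that are increasing on $[a_i,b_i]$ (the paper splits $A=\prod_i A_i$, while you put $A$ into the first factor), check star quasiconvexity of each factor directly from $\lambda a_i+(1-\lambda)y_i\le y_i$, and apply the direction $(\Leftarrow)$ of Theorem \ref{th:separable-product}$(a)$. You also explicitly check positivity of the factors and that $\overline{x}$ minimizes $H$, both of which the paper leaves implicit (for instance, it never states that $A_i>0$).
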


\begin{proof}
 Since every $x_{i} \mapsto x_i^{\alpha_i}$, $\alpha_{i} > 0$, is increasing on $[a_i, b_i]$, the Cobb-Douglas function $H(x) = A \prod_{i=1}^m x_i^{\alpha_i}$ attains its minimum at $\overline{x} = (a_1, \ldots, a_m)$.

 Denote $H(x) = \prod_{i=1}^m h_i(x_i)$ with $h_i(x_i) = A_i x_i^{\alpha_i}$ and $\prod_{i=1}^m A_i = A$. Since every $h_i$ is increasing on $[a_i, b_i]$, we have for every $y_i \in X_i$ and every $\lambda \in [0,1]$ that $\lambda a_i + (1-\lambda)y_i \leq y_{i}$ and thus
 $$h_i(\lambda a_i + (1-\lambda)y_i) \le h_i(y_i),$$
 then $h_i$ is star quasiconvex with respect to $a_i$. Therefore, $H$ is star quasiconvex with respect to $\overline{x} = (a_1,\ldots,a_m)$ by Theorem \ref{th:separable-product}$(a)$.
\end{proof}

\begin{remark}
 Since functions $x_{i} \mapsto (x_{i})^{\alpha_{i} }$, $\alpha_{i} > 0$, are increasing, their minimum is attained at the left-hand side of $X_{i}$, hence Corollary \ref{cor:cobb-douglas} can be easily extended to the case of unbounded sets of the type $X_{i} = [a_{i}, + \infty[$.
\end{remark}

\section{Applications}\label{sec:04}

\subsection{Weighted Quasi-Arithmetic Means}\label{subsec:4-1}

Let $f:\mathbb{R} \rightarrow \mathbb{R}$ be a continuous and strictly monotonic function; hereafter it will be called the {\it mean generator}.
Then we define the {\it weighted quasi-arithmetic mean} (WQAM henceforth) as follows:
\begin{equation}\label{wqam}
 M_{f} (x) = f^{-1}\left( \sum_{i=1}^{n} w_i f(x_i) \right), \tag{WQAM}
\end{equation}
where the weights $w_i >0$ for every $i=1,\ldots,n$  and $\sum_{i=1}^{n} w_i=1$. 

Let us note that quasi-arithmetic means have been studied since almost one century ago \cite{knopp,kolmogorov}, and its importance in all research fields is outstanding. 

\medskip

Before continuing, we recall that given a function $g: \mathbb{R}^{n} \rightarrow \mathbb{R}$ and a set $K \subseteq \mathbb{R}^{n}$, then $g|_{K}$ denotes the restriction of the function $g$ to the set $K$.

\medskip

In the next result, we provide sufficient conditions for the weighted quasi-arithmetic mean to be (strongly) star quasiconvex.

\begin{theorem} {\bf (Star Quasiconvexity of \eqref{wqam}} \label{th:wqam}
 Let $X = \prod_{i=1}^n X_i$, where every $X_i \subseteq \mathbb{R}$ is convex, and $w_i > 0$ with $\sum_{i=1}^n w_i = 1$. Consider the weighted quasi-arithmetic mean $M_f: X \to \mathbb{R}$ defined as in \eqref{wqam}, where $f: I \to \mathbb{R}$ is a continuous strictly monotonic function on an interval $I \subseteq \mathbb{R}$ containing all values $x_i \in X_i$. Furthermore, let $\overline{x} = (\overline{x}_1,\dots,\overline{x}_n) \in {\rm argmin}_X M_f$ and $S(x) := \sum_{i=1}^n w_i f(x_i)$. Then the following assertions hold:
\begin{itemize}
 \item[$(a)$] Suppose that $f$ is increasing. If every function $h_i := f|_{X_i}$ is (strongly) star quasiconvex with respect to $\overline{x}_i$ with modulus $\gamma_i \geq 0$, then $M_f$ is (strongly) star quasiconvex with respect to $\overline{x}$ with modulus
 $$ \gamma_{M_f} = \mu \left( \min_{1 \leq i \leq n} w_i \gamma_i \right) \geq 0,$$
 where $\mu := \inf_{t \in S(X)} (f^{-1})^{D-}(t) \geq 0$.

 \item[$(b)$] Suppose that $f$ is decreasing. If every function $h_i := f|_{X_i}$ is (strongly) star quasiconcave with respect to $\overline{x}_i$ with modulus $\gamma_i \geq 0$, then $M_f$ is (strongly) star quasiconvex with respect to $\overline{x}$ with modulus
 $$\gamma^{\prime}_{M_f} = \mu' \left( \min_{1 \leq i \leq n} w_i \gamma_i \right) \geq 0,$$
 where $\mu' := \inf_{t \in (-S)(X)} \tilde{g}^{D-}(t) \geq 0$ and $\tilde{g}(t) = f^{-1}(-t)$.
\end{itemize}
\end{theorem}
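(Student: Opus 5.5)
The plan is to write $M_f = f^{-1}\circ S$ in part $(a)$ and $M_f = \tilde g\circ(-S)$ in part $(b)$. In each case the proof combines Theorem \ref{th:sepa}, applied to the separable inner function, with the Monotonic Composition Theorem \ref{prop:comp}, applied to the outer increasing map. Along the way we must check that the minimizer transfers correctly.

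\emph{Part $(a)$.} Since $f$ is continuous and increasing on $I$, its inverse $f^{-1}$ is continuous and increasing on $f(I)$.
\begin{itemize}
\item[$(1)$] \textbf{Minimizers.} Fact 1 gives $\overline{x}\in{\rm argmin}_X M_f$ if and only if $\overline{x}\in{\rm argmin}_X S$. Since $S$ is additively separable with $S(x)=\sum_i w_i h_i(x_i)$, this holds if and only if $\overline{x}_i\in{\rm argmin}_{X_i}(w_ih_i)={\rm argmin}_{X_i}h_i$.
\item[$(2)$] \textbf{Scaling.} Multiplying the defining inequality of $h_i$ by $w_i>0$ shows that $w_ih_i$ is (strongly) star quasiconvex with respect to $\overline{x}_i$ with modulus $w_i\gamma_i$. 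Because the inequality is monotone in the modulus, each $w_ih_i$ also has the common modulus $\gamma:=\min_i w_i\gamma_i$.
\item[$(3)$] \textbf{Sum.} Theorem \ref{th:sepa} (the $(\Leftarrow)$ direction) shows that $S$ is (strongly) star quasiconvex with modulus $\gamma$ with respect to $\overline{x}$.
\item[$(4)$] \textbf{Composition.} Apply Theorem \ref{prop:comp} with $g=f^{-1}$. This gives that $M_f=f^{-1}\circ S$ has modulus $\mu\gamma$, where $\mu=\inf_{t\in S(X)}(f^{-1})^{D-}(t)\ge 0$. Nonnegativity of $\mu$ holds because $f^{-1}$ is nondecreasing.
\end{itemize}

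\emph{Part $(b)$.} When $f$ is decreasing, each $-h_i$ is (strongly) star quasiconvex with modulus $\gamma_i$ with respect to $\overline{x}_i$, by the definition of star quasiconcavity. The chain is then the same as in part $(a)$:
\begin{itemize}
\item[$(1)$] The function $-S=\sum_i w_i(-h_i)$ is (strongly) star quasiconvex with modulus $\min_i w_i\gamma_i$, by steps $(2)$ and $(3)$ above.
\item[$(2)$] Write $M_f(x)=f^{-1}(S(x))=\tilde g(-S(x))$ with $\tilde g(t)=f^{-1}(-t)$. Since $f^{-1}$ is decreasing, $\tilde g$ is continuous and increasing on $-f(I)$.
\item[$(3)$] Fact 1 gives ${\rm argmin}_X M_f={\rm argmin}_X(-S)$. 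This set equals $\prod_i{\rm argmin}_{X_i}(-h_i)$, which is consistent with $\overline{x}_i$ being the reference points of star quasiconcavity.
\item[$(4)$] Theorem \ref{prop:comp} then gives the modulus $\mu'\min_i w_i\gamma_i$.
\end{itemize}

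\emph{Main obstacle.} Theorem \ref{prop:comp} requires an open convex domain, while the $X_i$ here are only assumed convex. The same issue affects the mean value Theorem \ref{MVT}, which needs $g$ continuous on an open interval containing $[a,b]$.

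There are two ways to handle this, and I would try the first.
\begin{itemize}
\item Reprove the one-line composition estimate directly. For $a=S(\lambda\overline{x}+(1-\lambda)x)\le b=S(x)$, apply Theorem \ref{MVT} to $f^{-1}$ on an open interval containing $[a,b]\subseteq S(X)$. Such an interval exists because $S(X)\subseteq f(I)$ and $f^{-1}$ extends continuously and monotonically. This gives
\[
M_f(x)-M_f(\lambda\overline{x}+(1-\lambda)x)\ge \mu\,(b-a)\ge \mu\lambda(1-\lambda)\tfrac{\gamma}{2}\|x-\overline{x}\|^2 .
\]
\item Alternatively, observe that openness is never used in the proof of Theorem \ref{prop:comp} beyond applicability of Theorem \ref{MVT} on $[a,b]$.
\end{itemize}

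One subtlety remains: $\mu$ is defined as an infimum over $S(X)$, whereas Theorem \ref{MVT} takes the infimum over $[a,b]$. This causes no problem, because $[a,b]\subseteq S(X)$. Indeed, $S$ restricted to the segment from $\overline{x}$ to $x$ is continuous (as $f$ is continuous), so by the intermediate value theorem it takes every value between $a$ and $b$, and this segment lies in $X$ by convexity.
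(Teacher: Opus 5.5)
Your proposal is correct and follows the paper's proof. Both transfer the minimizer via Fact 1, multiply each star quasiconvexity inequality by $w_i$ and pass to the common modulus $\min_i w_i\gamma_i$, sum to get $S$, and finish with Theorem~\ref{prop:comp} applied to $f^{-1}$ (or to $\tilde g$ in part $(b)$, which the paper only calls similar). The paper redoes the summation by hand where you cite Theorem~\ref{th:sepa}, and your extra checks fill in points the paper passes over silently: that openness of the domain plays no role, and that $[a,b]\subseteq S(X)$ by continuity of $S$ along the segment, so the infimum over $S(X)$ controls the one in Theorem~\ref{MVT}.
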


\begin{proof}
 Proofs are similar, we only prove $(a)$. Since $f$ is increasing, $g := f^{-1}$ is increasing. Thus, $\overline{x} \in {\rm argmin}_X M_f$ if and only if $\overline{x} \in {\rm argmin}_X S$ by {\bf Fact 1}. 

 For every $i$, since $h_i := f|_{X_i}$ is (strongly) star quasiconvex with respect to $\overline{x}_i$ with modulus $\gamma_i \geq 0$, we have for every $y_i \in X_i$ and every $\lambda \in [0,1]$ that
 \begin{align*}
  & \hspace{1.35cm} h_i(\lambda \overline{x}_i + (1-\lambda)y_i) \leq h_i(y_i) - \lambda(1-\lambda)\frac{\gamma_i}{2}|y_i - \overline{x}_i|^2 \\
  & \overset{w_{i} > 0}{\Longrightarrow} ~ w_i h_i(\lambda \overline{x}_i + (1-\lambda)y_i) \leq w_i h_i(y_i) - \lambda(1-\lambda)\frac{w_i \gamma_i}{2}|y_i - \overline{x}_i|^2.
 \end{align*} 

 Let $\gamma_S := \min_{1 \leq i \leq n} w_i \gamma_i$. Since $w_i \gamma_i \geq \gamma_S$ for all $i$, it follows that
 $$w_i h_i(\lambda \overline{x}_i + (1-\lambda)y_i) \leq w_i h_i(y_i) - \lambda(1-\lambda)\frac{\gamma_S}{2}|y_i - \overline{x}_i|^2.$$

 Summing over $i = 1,\dots,n$, we get
 \begin{align*}
  S(\lambda \overline{x} + (1-\lambda)y) & \leq S(y) - \lambda(1-\lambda)\frac{\gamma_S}{2}\sum_{i=1}^n |y_i - \overline{x}_i|^2 \\
  & = S(y) - \lambda(1-\lambda)\frac{\gamma_S}{2} \|y - \overline{x}\|^2,
 \end{align*} 
 i.e., $S$ is (strongly) star quasiconvex with respect to $\overline{x}$ with modulus $\gamma_S = \min_{1 \leq i \leq n} w_i \gamma_i$.

 Since $g = f^{-1}$ is increasing, $\mu = \inf_{t \in S(X)} g^{D-}(t) \geq 0$. By Theorem \ref{prop:comp} applied to $g$ and $S$, $M_f = g \circ S$ is (strongly) star quasiconvex with respect to $\overline{x} \in {\rm argmin}_X M_f$ with modulus
 $$\gamma_{M_f} = \mu \gamma_S = \mu \left( \min_{1 \leq i \leq n} w_i \gamma_i  \right) \geq 0,$$
%
%
%
which completes the proof.
\end{proof}

In the star quasiconvex case (not strongly), we have the following characterization for star quasiconvexity of \eqref{wqam}.

\begin{theorem}{\bf (Characterization of Star Quasiconvexity of \eqref{wqam})} \label{th:wqam-converse}
 Let $X = \prod_{i=1}^n X_i$, where every $X_i \subseteq \mathbb{R}$ is convex, and $w_i > 0$ with $\sum_{i=1}^n w_i = 1$. Consider the weighted quasi-arithmetic mean $M_f: X \to \mathbb{R}$ defined as in \eqref{wqam}, where $f: I \to \mathbb{R}$ is a continuous strictly monotonic function on an interval $I \subseteq \mathbb{R}$ containing all values $x_i \in X_i$. Let $\overline{x} = (\overline{x}_1, \ldots, \overline{x}_n) \in {\rm argmin}_X M_f$. Then the following assertions hold:
 \begin{itemize}
  \item[$(a)$] Suppose that $f$ is increasing. Then $M_f$ is star quasiconvex with respect to $\overline{x}$ if and only if every function $f|_{X_i}$ is star quasiconvex with respect to $\overline{x}_i$.
     
  \item[$(b)$] Suppose that $f$ is decreasing. Then $M_f$ is star quasiconvex with respect to $\overline{x}$ if and only if every function $f|_{X_i}$ is star quasiconcave with respect to $\overline{x}_i$.
 \end{itemize}
\end{theorem}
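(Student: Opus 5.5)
The plan is to reduce everything to the additive function $S(x) = \sum_{i=1}^n w_i f(x_i)$ and then use Theorem \ref{th:sepa}. We never need a Dini-derivative bound here, because with modulus $\gamma = 0$ strictly monotone transformations preserve and reflect the defining inequality exactly.

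For $(a)$, let $f$ be increasing. Then $f^{-1}$ is increasing on $f(I)$, so $M_f = f^{-1}\circ S$. By \textbf{Fact 1}, $\overline{x}\in{\rm argmin}_X M_f$ if and only if $\overline{x}\in{\rm argmin}_X S$. The first step is the equivalence, for all $y\in X$ and $\lambda\in[0,1]$,
\[
 M_f(\lambda\overline{x}+(1-\lambda)y)\le M_f(y) \iff S(\lambda\overline{x}+(1-\lambda)y)\le S(y).
\]
This holds because $f^{-1}$ is strictly increasing; equivalently, apply $f$ to both sides. Hence $M_f$ is star quasiconvex with respect to $\overline{x}$ if and only if $S$ is. Next, $S$ is additively separable with components $w_i f|_{X_i}$. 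By Theorem \ref{th:sepa} with $\gamma=0$, $S$ is star quasiconvex with respect to $\overline{x}$ if and only if every $w_i f|_{X_i}$ is star quasiconvex with respect to $\overline{x}_i$. That theorem also gives $\overline{x}_i\in{\rm argmin}_{X_i} w_i f|_{X_i}$. Since $w_i>0$, multiplying by $w_i$ changes neither minimizers nor the inequality $h(\lambda\overline{x}_i+(1-\lambda)y_i)\le h(y_i)$, which finishes $(a)$.

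For $(b)$, let $f$ be decreasing. Then $f^{-1}$ is decreasing, so
\[
 M_f(\lambda\overline{x}+(1-\lambda)y)\le M_f(y) \iff S(\lambda\overline{x}+(1-\lambda)y)\ge S(y),
\]
and $\overline{x}$ minimizes $M_f$ if and only if it maximizes $S$, that is, minimizes $-S$. So $M_f$ is star quasiconvex at $\overline{x}$ if and only if $-S=\sum_i w_i(-f|_{X_i})$ is. Theorem \ref{th:sepa} applied to $-S$ then gives the equivalence with star quasiconvexity of each $-f|_{X_i}$ at $\overline{x}_i$, which by definition is star quasiconcavity of $f|_{X_i}$.

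The main care point is the bookkeeping about minimizers. Theorem \ref{th:sepa} presupposes that $\overline{x}$ minimizes the sum; we get this from the argmin transfer above. It also yields that the $\overline{x}_i$ are componentwise minimizers (respectively maximizers of $f|_{X_i}$ in case $(b)$), which is exactly what Definition \ref{ss:quasiconvex} requires for each component. A second point to check is that every value $S(y)$ lies in $f(I)$, so that $f^{-1}$ makes sense. This holds because $f(I)$ is an interval and $S(y)$ is a convex combination of the points $f(y_i)\in f(I)$.
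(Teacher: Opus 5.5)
Your proof is correct, but it organizes the argument differently from the paper.

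The paper proves ($\Rightarrow$) directly on $M_f$. It freezes every coordinate except $i_0$ at $\overline{x}$ and strips off $f^{-1}$ by monotonicity, which is the test-point trick of Theorem~\ref{th:sepa} redone by hand. For ($\Leftarrow$) it simply cites Theorem~\ref{th:wqam} with $\gamma=0$. That theorem sums the componentwise inequalities to make $S$ star quasiconvex, then pushes this through $f^{-1}$ using the Monotonic Composition Theorem~\ref{prop:comp} (Dini derivatives and a mean value theorem). The decreasing case is only declared ``similar''.

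You instead isolate the one fact that matters at modulus zero. A strictly monotone outer map both preserves and reflects the inequality $M_f(\lambda\overline{x}+(1-\lambda)y)\le M_f(y)$, so $M_f$ is star quasiconvex at $\overline{x}$ exactly when $S$ (resp.\ $-S$) is. You then let Theorem~\ref{th:sepa} handle both directions.

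Your route has several advantages:
\begin{itemize}
\item it is more elementary and treats both implications uniformly;
\item it handles part $(b)$ explicitly;
\item it checks that $S(y)\in f(I)$, which the paper leaves implicit;
\item it avoids Theorem~\ref{prop:comp}, whose open-domain hypothesis is not guaranteed for arbitrary convex $X_i$ (e.g.\ compact intervals) and whose Dini bound is irrelevant when $\gamma=0$.
\end{itemize}
The paper's route, in exchange, is a specialization of a result that also gives explicit moduli in the strongly star quasiconvex case.
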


\begin{proof}
 ($\Rightarrow$) Assume that $M_f$ is star quasiconvex with respect to $\overline{x}$. Fix an index $i_0 \in \{1,\ldots,n\}$ and take $y \in X_{i_0}$. Define $z = (z_1,\ldots,z_n) \in X$ by $z_j = \overline{x}_j$ for $j \neq i_0$ and $z_{i_{0}} = y$. Then, for every $\lambda \in [0,1]$, consider $\lambda \overline{x} + (1-\lambda)z$. Thus
 $(\lambda \overline{x} + (1-\lambda)z)_j = \overline{x}_j$ when $j \neq i_0$, and $(\lambda \overline{x} + (1-\lambda)z)_{i_{0}} = \lambda \overline{x}_{i_0} + (1-\lambda)y$. By star quasiconvexity of $M_f$ with respect to $\overline{x}$, we have
$$M_f(\lambda \overline{x} + (1-\lambda)z) \leq M_f(z).$$
Using the definition of $M_f$, this inequality is equivalent to
$$f^{-1} \left( w_{i_0} f(\lambda \overline{x}_{i_0} + (1-\lambda) y) + \sum_{j \neq i_0} w_j f(\overline{x}_j) \right) \leq f^{-1} \left( w_{i_0} f(y) + \sum_{j \neq i_0} w_j f(\overline{x}_j) \right).$$
Denote $C := \sum_{j \neq i_0} w_j f(\overline{x}_j)$. Then we have two cases:

\begin{itemize}
 \item[$(a)$] Suppose that $f$ is increasing. Then $f^{-1}$ is increasing, thus
\begin{align*}
 & f^{-1}\left( w_{i_0} f(\lambda \overline{x}_{i_0} + (1-\lambda)y) + C \right) \leq f^{-1}\left( w_{i_0} f(y) + C \right) \\
 & \Longrightarrow ~ w_{i_0} f(\lambda \overline{x}_{i_0} + (1-\lambda)y) + C \leq w_{i_0} f(y) + C \\
 & \Longrightarrow \hspace{1.35cm} f(\lambda \overline{x}_{i_0} + (1-\lambda)y) \leq f(y). 
\end{align*} 
Since $y \in X_{i_0}$ and $\lambda \in [0,1]$ were arbitrary, $f|_{X_{i_0}}$ is star quasiconvex with respect to $\overline{x}_{i_0}$. As $i_0$ was arbitrary, this holds for every $i \in \{1, \ldots, n\}$.

\item[$(b)$] Suppose that $f$ is decreasing. Then $f^{-1}$ is decreasing, so the inequality $M_f(\lambda \overline{x} + (1-\lambda)z) \leq M_f(z)$ implies
\[
w_{i_0} f(\lambda \overline{x}_{i_0} + (1-\lambda)y) + C \geq w_{i_0} f(y) + C ~ \Longrightarrow ~ f(\lambda \overline{x}_{i_0} + (1-\lambda)y) \geq f(y).
\]
This means $f|_{X_{i_0}}$ is star quasiconcave with respect to $\overline{x}_{i_0}$. Again, $i_0$ was arbitrary, so it holds for every $i \in \{1, \ldots, n\}$.
\end{itemize}

($\Leftarrow$) It is Theorem~\ref{th:wqam} for the 
case $\gamma = 0$.
\end{proof}

Now, we present a simple example of a weighted quasi-arithmetic mean that is star quasiconvex without being either quasiconvex or convex.

\begin{remark}
 Consider the weighted geometric mean on $[T_{1}, T_{2}] \times [T_{1}, T_{2}]$, $0 \leq T_{1} < T_{2} < + \infty$, given by
 $$ M(x_1, x_2) = \sqrt{x_1 x_2}.$$
 This is a quasi-arithmetic mean with $f(t) = \ln t$ and equal weights $w_1 = w_2 = \frac{1}{2}$. Observe that $M$ is star quasiconvex with respect to $(T_{1}, T_{1})$ by Theorem \ref{th:wqam}$(a)$, and it is not quasiconvex (thus nonconvex) since not all sublevel sets are convex.
\end{remark}

%

Means appears in modern financial applications, such as crypto-financial markets, as we illustrate below.

\begin{example}{\bf (Constant Functions Market Makers in Crypto)} \label{ex:crypto-markets}
 Automated Market Makers (AMMs) are algorithmic mechanisms that facilitate trading without traditional order books. In crypto-financial markets, AMMs are implemented as Decentralized Exchanges (DEXs) \cite{SN}.

 Let $n \geq 2$ be the number of cryptoassets in the DEX. A trade is represented by vectors $x, y \in \mathbb{R}^n_{+}$, where $x_i$ denotes the amount of asset $i$ the trader wishes to receive, and $y_i$ the amount offered in payment. 
 Thus, $(x,y) \in \mathbb{R}^n_{+} \times \mathbb{R}^n_{+}$ is a trade proposal.

The DEX holds reserves $R = (R_1,\ldots,R_n) \in \mathbb{R}^n_{+}$. A trading function $\varphi \colon \mathbb{R}^n_{+} \to \mathbb{R}$ determines whether a trade is admissible. Given a fee parameter $\gamma \in (0,1)$, a trade $(x,y)$ is accepted if and only if
\begin{equation}\label{valid:trades}
\varphi(R + \gamma y - x) = \varphi(R).
\end{equation}
In virtue of \eqref{valid:trades}, the literature also knows these models as {\rm Constant Function Market Makers} (see \cite{AB-2022,AC}).

Prominent DEX designs employ trading functions that are weighted quasi-arithmetic means. For example:
\begin{itemize}
    \item[$(i)$] Balancer \cite{MM}, Uniswap \cite{ZCP}, and SushiSwap \cite{AB-2022,Sushi} use the weighted geometric mean.
    \item[$(ii)$] StableSwap and mStable \cite{AB-2022,Egorov} use the weighted arithmetic mean.
\end{itemize}
Since these trading functions are instances of weighted quasi-arithmetic means (see \cite{ELS-1}), Theorems \ref{th:wqam} and \ref{th:wqam-converse} imply they are star quasiconvex under mild assumptions on the mean generator.
\end{example}

\subsection{Ratio Models with Multiple Risk Factors}
\label{subsec:4-2}

Let $X_i \subseteq \mathbb{R}^{n_i}$ and $Y_j \subseteq \mathbb{R}^{k_j}$ be convex sets for every $i \in \{1, \ldots, m\}$ and every $j \in \{1, \ldots, \ell\}$, respectively. Define $X := \prod_{i=1}^{m} X_i$ and $Y := \prod_{j=1}^{\ell} Y_j$, and consider two functions $f: X \rightarrow \mathbb{R}$ and $g: Y \rightarrow \, ]0, +\infty[$. We examine the ratio function $P: X \times Y \rightarrow \mathbb{R}$ given by
\begin{equation}\label{pro:function}
 P(x, y) = \frac{f(x)}{g(y)}.
\end{equation}
Such ratio structures arise naturally in models that trade off benefits against risks or costs, for instance, in efficiency analysis, risk-adjusted performance measurement, and preference models with background risk.  

A canonical and analytically convenient specification is obtained when \(f\) and \(g\) are multiplicatively separable. The Cobb-Douglas \cite{CD} form serves as an important illustration:
$$f(x) := c \prod_{i=1}^{m} x_i^{\alpha_i}, \qquad 
g(y) := d \prod_{j=1}^{\ell} y_j^{\beta_j},$$
with $c, d > 0$ and $\alpha_i, \beta_j > 0$. This specification admits two standard microeconomic interpretations:

\begin{itemize}
    \item[$(i)$] \textit{Production Theory.}  
    Here \(P(x,y)\) acts as a Shephard distance function \cite{McF}, measuring technical efficiency relative to a benchmark technology.  
    The numerator \(f(x)\) models output from inputs \(x\) (e.g., capital, labor), while the denominator \(g(y)\) captures production risk or opportunity costs influenced by factors \(y\) (e.g., input price volatility, policy uncertainty).

    \item[$(ii)$] \textit{Consumer Theory.}  
    In this setting \(P(x,y)\) models a risk‑adjusted uti\-li\-ty. The Cobb-Douglas term \(f(x)\) represents direct utility from goods \(x\), whereas \(g(y)\) adjusts for risk or disutility associated with factors \(y\) (e.g., investment risk, price fluctuations).  
    Such a form neatly captures benefit–cost (or benefit-risk) trade‑offs in a normalized way \cite{ADSZ}.
\end{itemize}

To analyze optimization problems involving \(P\), it is standard to apply a monotonic transformation that preserves preference or technology rankings.  Choosing the logarithm  yields the additively separable form (for simplicity, we set $n_i = k_j = 1$; the multidimensional case follows by the same argument)
$$(\ln \circ P)(x, y) = \ln(c) - \ln(d) + \sum_{i=1}^{m} \alpha_i \ln(x_i) - \sum_{j=1}^{\ell} \beta_j \ln(y_j).$$
In a minimization context, one considers
\begin{equation}\label{eq:motiv}
 \min_{x, y} \Bigl( \ln(d) - \ln(c) + \sum_{j=1}^{ \ell} \beta_j \ln(y_j) - \sum_{i=1}^{m} \alpha_i \ln(x_i) \Bigr).
\end{equation}

The function in \eqref{eq:motiv} is convex (hence quasiconvex) in $x$ and concave (but also quasiconvex) in $y$. To maintain classical quasiconvexity of the separable sum, the Debreu-Koopmans condition would force $\ell = 1$ (see also \cite[Lemma 5.11]{ADSZ}).  That restriction would allow only a single risk variable $y$, an unrealistic limitation in virtually any applied economic setting.

On the other hand, each component of \eqref{eq:motiv} is \textit{star quasiconvex} since the $\ln(x_i)$ terms are convex (hence star quasiconvex), and the $\ln(y_j)$ terms are concave and quasiconvex (thus also star quasiconvex).  Consequently, by Theorem \ref{th:sepa}, the entire separable function $(\ln \circ P) (x, y)$ is star quasiconvex.  

Since the logarithmic transformation is increasing and continuous, and function $\ln P(x,y)$ is star quasiconvex by Theorem \ref{th:sepa}, it follows by Theorem \ref{prop:comp} that $P(x,y) = \exp(\ln P(x,y))$ is also star quasiconvex. Thus, the original economic model $P(x,y)$ represents preferences (or technology) consistent with the  economically meaningful structure of star‑shaped preferences.  

Consequently, star quasiconvexity permits the model to incorporate \textit{multiple risk factors} ($\ell \geq 1$) without violating the diversification axiom, thereby solving the limitation imposed by the classical Debreu-Koopmans theorem.  The same reasoning applies to more flexible separable specifications (e.g., CES or translog forms), underscoring the broad applicability of the star quasiconvex framework in modern economic modeling.


\section{Conclusions and Future Work}\label{sec:05}

This paper solved the Debreu-Koopmans problem by showing that the separable aggregation of quasiconvex components yields a star quasiconvex function, whose sublevel sets are star-shaped with respect to minimizers. This result bridges classical utility theory and behavioral economics: it permits separable functional forms to incorporate multiple nonconvex components while preserving diversification, thereby overcoming the restrictive {\it single nonconvex component} limitation of classical quasiconvexity.


Furthermore, we have developed a comprehensive calculus for star quasiconvexity. This calculus enables the construction of economically relevant models that were previously excluded by the Debreu-Koopmans constraint, demonstrating that star quasiconvexity is not merely a weaker property but a mathematically rich structure with its own calculus.

Looking forward, our findings open several promising directions:
\begin{itemize}
    
 \item[$(i)$] {\it Applied Economic Modeling}: Employing star quasiconvex separable forms in integrated assessment models, financial risk frameworks, and production models where classical quasiconvexity restrictions are prohibitive.
    
 \item[$(ii)$] {\it Computational Methods}: Exploiting the separable structure of star quasiconvex objectives to develop efficient optimization algorithms for large-scale economic and machine learning problems, potentially leveraging parallelization and decomposition techniques.
\end{itemize}

From a mathematical point of view, the Debreu-Koopmans problem remains open when the aggregate function $h$ has no minimizers, for instance, when dealing with unbounded domains or preferences that lack a satiation point. This issue is both significant and technically challenging, requiring a deeper understanding of star quasiconvexity on noncompact sets. For instance, observe that the nonconvex function 
$$h(x_{1}, x_{2}) = \ln (x_{1}) + \ln(x_{2}),$$ 
is a kind of ``star quasiconvex function" at $(0, 0) \in \mathbb{R}^{2}$, even when it is not defined at that point.


\section{Declarations}\label{sec:06}







\subsection{Availability of Supporting Data}

No data sets were generated during the current study. 

%

\subsection{Competing Interests}

There are no conflicts of interest or competing interests related to this manuscript.

\subsection{Funding}

This research was partially supported by ANID--Chile under project Fondecyt Regular 1241040. 


\end{document}